\theoremstyle{plain}
\newtheorem{Theorem}{Theorem}[section]
\newtheorem{lemma}[Theorem]{Lemma}
\newtheorem{corollary}[Theorem]{Corollary}
\newtheorem{proposition}[Theorem]{Proposition}
\theoremstyle{definition}
\newtheorem{definition}[Theorem]{Definition}
\newtheorem{remark}[Theorem]{Remark}
\numberwithin{equation}{section}
\def\eps{\varepsilon}
\def\til{\widetilde}
\def\ha{\widehat}
\def\sem{\setminus}
\def\lin{\overline}
\newcommand{\F}{{\cal F}}
\def\pa{\partial}
\numberwithin{equation}{section}
  \DeclareMathOperator{\Imm}{Im}
\DeclareMathOperator{\mA}{m}
\def \H{\mathbb{H}}
\def \E{\mathbb{E}}
\def \P{\mathbb{P}}
\def \C{\mathbb{C}}
\def \N{\mathbb{N}}
\def\I {{\bf 1}}
\def \R{\mathbb{R}}
\def \({\left(}
\def \){\right)}
\def \[{\left[}
\def \]{\right]}
\newcommand{\BGE}{\begin{equation}}
\newcommand{\BGEN}{\begin{equation*}}
\newcommand{\EDE}{\end{equation}}
\newcommand{\EDEN}{\end{equation*}}
\begin{document}
\title{Decomposition of backward SLE in the capacity parameterization}
\author{Benjamin Mackey\thanks{Michigan State University and Cleveland State University.} and Dapeng Zhan\thanks{Michigan State University. Partially supported by NSF grant  DMS-1056840 and Simons Foundation grant \#396973.}}
%\affil{Michigan State University}
%\renewcommand\Authands{ and }
\maketitle

\begin{abstract}
  We prove that, for $\kappa\le 4$, backward chordal SLE$_\kappa$ admits backward chordal SLE$_\kappa(-4,-4)$ decomposition for the capacity parametrization.
  This means that, for any bounded measurable subset $U\subset Q_4:=\R_+\times\R_-$, if we integrate the laws of extended backward chordal SLE$_\kappa(-4,-4)$ with different pairs of force points $(x,y)$ against some suitable density function $G(x,y)$ restricted to $U$, then we get a measure, which is absolutely continuous with respect to the law of backward chordal SLE$_\kappa$, and the Radon-Nikodym derivative is a constant depending on $\kappa$ times the capacity time that the generated welding curve $t\mapsto (d_t,c_t)$ spends in $U$, where $d_t>0>c_t$ are the pair of points that are swallowed by the process at time $t$. For the forward SLE curve, a similar analysis has been done for SLE in the natural parametrization (\cite{Field} $\kappa \leq 4$, \cite{Zhan decomp} $\kappa <8$), and for the capacity parametrization (\cite{Zhan decomp} $\kappa < \infty$).
 \end{abstract}

\section{Introduction}
	Chordal Schramm-Loewner evolution, or SLE, is a path in the upper half plane generated by plugging a time scaled Brownian motion into the chordal Loewner differential equation. In some cases, the reverse, or backward, flow of the Loewner equation is easier to study and can be used to answer questions about the regular, or forward, SLE process. Analysis of the backward flow was used to show existence of the SLE$_{\kappa}$ curve for $\kappa \neq 8$ (\cite{Rhode Schramm}). In \cite{Law multifrac}, a multifractal analysis  is used to study moments for the backward SLE flow, which is used to provide a new proof of the Hausdorff dimension of an SLE path.

While the backward SLE process, abbreviated as BSLE$_{\kappa}$, generates a family of random paths, they do not form a good global object with which to study the whole process. Instead, for $\kappa \leq 4$, the BSLE$_{\kappa}$ process generates a random continuous homeomorphism $\phi: \R \to \R$ called the conformal welding defined by the following: for each $t>0$, there are exactly two points $c_t<0<d_t$ that are swallowed by the process at time $t$, then $\phi(c_t)=d_t$ and $\phi(d_t)=c_t$. BSLE and the conformal welding have been coupled with the Gaussian free field and what is called the Liouville quantum zipper \cite{Sheffield zipper}, where the welding of the real line onto the backward SLE$_{\kappa}$ traces is seen as the conformal image of gluing random surfaces together. In \cite{RZ}, the welding was proven to be time reversible, which is analogous to the fundamental reversibility property of the forward SLE$_{\kappa}$ path \cite{Zhan Reversal} \cite{Imaginary geometry 3}. This result is then used to study ergodic properties of the tip of a forward SLE$_{\kappa}$ in \cite{ZhanE}.

% Recently in \cite{Berestycki Jackson}, this connection to the Gaussian free field was used to provide a new proof of the existence of the forward SLE curve.

The goal of this paper is to establish a result for BSLE$_{\kappa}$, which is similar to the results in \cite{Zhan decomp} for the forward SLE$_{\kappa}$. In \cite{Zhan decomp}, a family of Green's functions $G^{\alpha}(z)$ are constructed, each of which
is associated with an SLE$_\kappa(\rho)$ process with an interior force point $z$ via Girsanov Theorem. The SLE$_\kappa(\rho)$ curve ends at the force point $z$, and an extended SLE$_\kappa(\rho)$ is defined by continuing this SLE$_\kappa(\rho)$ curve with a chordal SLE$_\kappa$ curve in the remaining domain from $z$ to $\infty$. The law of the extended SLE$_\kappa(\rho)$ is denoted by $P_z^\rho$. Given a bounded measurable set $U$ in the upper half plane, a new measure $\P_U^{\rho}$ is constructed by integrating $\P_z^{\rho}$ against $\I_U(z)G^{\rho}(z)A(dz)$, where $A$ is the Lebesgue measure.
For one particular parameter $\rho=\kappa-8$, this measure is absolutely continuous with respect the law of the chordal SLE$_\kappa$, and the Radon-Nikodym derivative is the Minkowski content of the path in $U$. This holds for all $\kappa \in (0,8)$, and extends an earlier result in \cite{Field}. For a different parameter $\rho=-8$, the integrated measure is also absolutely continuous and the Radon-Nikodym derivative is a constant times the capacity time the path spends in $U$. A similar result is also proven for the intersection of the SLE$_{\kappa}$ path with the boundary for $\kappa \in (4,8)$. In all of these cases, the results follow from more precise theorems about path decomposition. This work has recently been used to construct SLE loop measures \cite{Zhan SLE loop}.

In this paper, we consider the relation between BSLE$_\kappa$ and extended BSLE$_\kappa(\rho_+,\rho_-)$ with force points $x>0>y$, whose law is denoted by $\P^{\rho_+,\rho_-}_{x,y}$. We expect that for certain parameters $(\rho_+,\rho_-)$, for any bounded measurable set $U$ in the fourth quadrant, the measure $\P_U^{\rho_+,\rho_-}$ defined as $\int_U G^{(\rho_+,\rho_-)}(x,y) \P^{\rho_+,\rho_-}_{x,y}dxdy$ is absolutely continuous with respect to the law of BSLE$_\kappa$, where $G^{(\rho_+,\rho_-)}$ is the Green's function associated with BSLE$_\kappa(\rho_+,\rho_-)$ via Girsanov Theorem. This is done for one parameter: $\rho_+=\rho_-=-4$ in the case $\kappa\le 4$. Moreover, we prove that the Radon-Nikodym derivative is a constant times the capacity time that the welding curve $t\mapsto (d_t,c_t)$ spends in $U$.

\section{Preliminary}
\subsection{Backward SLE} \label{section: BSLE}
%Now we introduce the backward Loewner equation, which is studied extensively in \cite{RZ}.
We now review backward Loewner equations. There are two versions of such equations: chordal and radial. This work will focus on backward chordal Loewner equation, and so will omit the word ``chordal''.

%Let $\H$ denote the upper half plane $\{z\in\C: \Imm z>0\}$. An $\H$-hull is a relatively closed bounded subset $K$ of $\H$ such that $\H\sem K$ is  simply connected. For every $\H$-hull $K$, there is a unique conformal map $f_K$ from $\H$ onto $\H\sem K$ such that $f_K(z)=z-\frac{c}{z}+O(1/|z|^2)$ as $z\to \infty$. The constant $c\ge 0$ depending on $K$ is called the half-plane capacity of $K$, and is denoted by $\hcap(K)$. %The empty set $\emptyset$ is a trivial example of $\H$-hull with $f_\emptyset=\id$ and $\hcap(\emptyset)=0$.

%If there is a simple curve $\gamma:[0,a]\to \lin\H$ such that $\gamma([0,a))\subset \H$ and $\gamma(a)\in\R$, then the image  $\gamma([0,a))$ is an $\H$-hull, and will be called a simple curve $\H$-hull. The end of $\gamma$ that lies on $\R$ is called the root, and the  end that lies in $\H$  is called the tip.

For a real valued continuous function $\lambda_t$, $0\le t<T$, where $T\in(0,\infty]$, the backward Loewner equation driven by $\lambda$ is
\begin{equation}\label{eq: backward leq}
\partial_t f_t(z)=\frac{-2}{f_t(z)-\lambda_t}, \quad f_0(z)=z.
\end{equation}
This differs from the (forward) Loewner equation in \cite{Law} by a minus sign. The process $(f_t)_{t <T}$ is called the backward Loewner process driven by $\lambda$.
For each $z \in \C$, let $\tau_z$ denote the lifetime of the equation started at $z$, and let $S_t=\{z\in\C_:\tau_z\le t\}$, $0\le t<T$. Then $S_t=[c_t,d_t]$, $0\le t<T$, is a continuously increasing family of compact real intervals with $S_0=[c_0,d_0]=\{\lambda_0\}$, and each $f_t$ is a conformal map defined on $\C\sem S_t$ that satisfies $\lim_{z \to\infty} f_t(z)=\infty$ and $f_t(\lin z)=\lin{f_t(z)}$. Let $\H$ denote the upper half-plane $\{z\in\C: \Imm z>0\}$. Then $f_t(\H)\subset\H$, and $L_t:=\H\sem f_t(\H)$ is an $\H$-hull with half-plane capacity $2t$ (\cite{Law}). So we say that the backward Loewner process  given by the definition is parametrized by capacity.

\begin{definition} \label{welding}
We call  $\Phi(t)=\Phi^\lambda(t):=(d_t,c_t)\in [\lambda_0,\infty)\times(-\infty,\lambda_0]$, $0\le t<T$, the conformal  welding curve generated by $\lambda$. In the case when the maps $t\mapsto c_t$ and $t\mapsto d_t$ are both strictly monotonic,  we get an auto-homeomorphism $\phi=\phi^\lambda$ of the interval $S_T:=\bigcup_{0\le t<T} S_t$ such that $\phi(c_t)=d_t$ and $\phi(d_t)=c_t$ for each $t$. Such $\phi$ is called the conformal welding generated by $\lambda$.
\end{definition}

\begin{remark}
	The welding curve $\Phi^\lambda$ exists for all driving function $\lambda$, but the welding $\phi^\lambda$ may not exist. When $\phi^\lambda$ exists,  $\Phi^\lambda$ determines   $\phi^\lambda$, and   $\phi^\lambda$ determines $\Phi^\lambda$ up to parametrization.
\end{remark}

%If for every $0<t<T$, the Loewner hull $L_t$ is a simple curve $\H$-hull, then the map $t\mapsto a_t$ (resp.\ $t\mapsto d_t$) is a continuous and strictly decreasing (resp.\ increasing) function from $[0,T)$ onto $(a_T,\lambda_0]$ (resp.\ $[\lambda_0,b_T)$), where $a_T=\inf_{t<T} a_t$ (resp.\ $b_T=\sup_{t<T} b_t$). Then we get an auto-homeomorphism $\phi$ of $S_T:=(a_T,b_T)$ such that $\phi(a_t)=b_t$ and $\phi(b_t)=a_t$ for each $t$. Such $\phi$ is called the conformal welding generated by $\lambda$ via the backward Loewner equation. In fact, for each $t$, $f_t|_{\H}$ extends continuously to $\lin{\H}$, and maps $S_t$ onto the two sides of $L_t$; and if $y=\phi(x)$, then $\tau_x=\tau_y$, and for $t\ge \tau_x$, $f_t(x)=f_t(y)$, i.e., $x$ and $y$ are welded together by the process after $\tau_x$.

Let $\kappa >0$ and $B_t$ be a standard Brownian motion. The process given by solving (\ref{eq: backward leq}) with $\lambda_t=\sqrt\kappa  B_t$ is called the backward SLE$_{\kappa}$ process, and will be denoted by BSLE$_{\kappa}$.
Suppose $\kappa\in(0,4]$. From the relation between BSLE and SLE (\cite{Rhode Schramm}), it is known that the BSLE$_{\kappa}$ hulls are all simple curves. Thus, BSLE$_\kappa$ generates a random conformal welding $\phi$ of $S_\infty=\R$.
This welding is the main object of study in \cite{RZ}, where it is shown that the random map $x\mapsto 1/\phi(1/x)$ has the same law as $\phi$. This is analogous to the reversibility property of the forward SLE curve (\cite{Zhan Reversal, Imaginary geometry 3}).

We will also consider a variant of backward SLE, called the backward SLE process with force points which was introduced in \cite{Sheffield zipper,RZ}. Let $\rho_+,\rho_-\in\R$.
Let $x>0>y$. The chordal Loewner equation with driving function $\lambda$ which satisfies the SDE:
\BGE d\lambda_t = \sqrt{\kappa}dB_t + \frac{-\rho_+ dt}{\lambda_t-f_t(x)} + \frac{-\rho_- dt}{\lambda_t-f_t(y)}, \quad \lambda_0=0,\label{BSLE-force}\EDE
is called a BSLE$_{\kappa}(\rho_+,\rho_-)$ process started from $(0;x, y )$, where $\rho_+,\rho_-\in \R$ are the force values at the force points $x,y \in \R $. This definition can be extended to include more than two force points, which can also be placed in the interior of $\H$, but this is the amount of generality which will be needed in this paper.

%By Girsanov theorem, if $\kappa\le 4$, the BSLE$_{\kappa}(\rho_+,\rho_-)$ process also generates a conformal welding.

We now describe the Radon-Nikodym derivative of the law of a BSLE$_{\kappa}(\rho_+,\rho_-)$ driving process against the laws of a BSLE$_\kappa$ driving process, which is $(\sqrt{\kappa }B_t)$. Let  $X_t=f_t(x)-\lambda_t>0$ and $Y_t=f_t(y)-\lambda_t<0$.

\begin{proposition}\label{prop: general Greens functions}
Suppose $\lambda_t=\sqrt\kappa B_t$, $0\le t<\infty$.
Fix   $\rho_+,\rho_- \in \R$. Then
$$M^{\rho_+,\rho_-}_t(x,y):=|X_t|^{\frac{\rho_+}{-\kappa}} |Y_t|^{\frac{\rho_-}{-\kappa}} |X_t-Y_t|^{\frac{\rho_+\rho_-}{-2\kappa}} |f_t'(x)|^{\frac{\rho_+(\rho_++4-(-\kappa))}{-4\kappa}} |f_t'(-y)|^{\frac{\rho_-(\rho_-+4-(-\kappa))}{-4\kappa}},\quad 0\le t<\tau_x\wedge \tau_y,$$ is a local martingale such that the process $(\sqrt\kappa B_t)$ weighted by $M^{\rho_+,\rho_-}_t(x,y)/M^{\rho_+,\rho_-}_0(x,y)$ using Girsanov theorem is the driving process of a backward SLE$_{\kappa}(\rho_+,\rho_-)$ process started from $(0; x,y)$.
\end{proposition}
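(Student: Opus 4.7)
The strategy is a standard It\^o/Girsanov calculation applied to $M_t^{\rho_+,\rho_-}$. I will show that $M_t^{\rho_+,\rho_-}/M_0^{\rho_+,\rho_-}$ is the stochastic exponential $\mathcal{E}(N)_t$ of an explicit local martingale $N$, and then read off the new driver SDE from the Girsanov shift $\langle B,N\rangle_t$.

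First I record the basic differentials. From the backward Loewner ODE (2.1) with $\lambda_t=\sqrt\kappa B_t$, writing $X_t=f_t(x)-\lambda_t$ and $Y_t=f_t(y)-\lambda_t$, we obtain
$$dX_t=-\tfrac{2}{X_t}\,dt-\sqrt\kappa\,dB_t,\qquad dY_t=-\tfrac{2}{Y_t}\,dt-\sqrt\kappa\,dB_t,\qquad d(X_t-Y_t)=\tfrac{2(X_t-Y_t)}{X_tY_t}\,dt,$$
together with the deterministic ODEs $\partial_t\log|f_t'(x)|=2/X_t^2$ and $\partial_t\log|f_t'(y)|=2/Y_t^2$ from differentiating (2.1) in $z$. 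The key cancellation in $d(X_t-Y_t)$ (no $dB_t$ term) is why that factor contributes a drift-only piece. Applying It\^o gives $d\log|X_t|=-\tfrac{\sqrt\kappa}{X_t}\,dB_t-\tfrac{4+\kappa}{2X_t^2}\,dt$ and $d\log|X_t-Y_t|=\tfrac{2}{X_tY_t}\,dt$, and symmetrically for $Y_t$.

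Second, I apply It\^o to $\log M_t^{\rho_+,\rho_-}$. The stochastic content comes only from $\log|X_t|$ and $\log|Y_t|$, yielding the local martingale
$$dN_t=\Big(\tfrac{\rho_+}{\sqrt\kappa\,X_t}+\tfrac{\rho_-}{\sqrt\kappa\,Y_t}\Big)dB_t,\qquad d\langle N\rangle_t=\tfrac{1}{\kappa}\Big(\tfrac{\rho_+^2}{X_t^2}+\tfrac{2\rho_+\rho_-}{X_tY_t}+\tfrac{\rho_-^2}{Y_t^2}\Big)dt.$$
The crucial step is to verify that the drift of $d\log M_t$ equals $-\tfrac12 d\langle N\rangle_t$, organized by $1/X_t^2$, $1/Y_t^2$, $1/(X_tY_t)$. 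The cross term is killed by the $|X_t-Y_t|$ factor: its contribution $-\rho_+\rho_-/(\kappa X_tY_t)$ matches $-\tfrac12\cdot 2\rho_+\rho_-/(\kappa X_tY_t)$ exactly. The $1/X_t^2$ coefficient combines the $\log|X_t|$ drift with the $\log|f_t'(x)|$ drift as $\frac{\rho_+(4+\kappa)}{2\kappa}-\frac{\rho_+(\rho_++4+\kappa)}{2\kappa}=-\frac{\rho_+^2}{2\kappa}$, again matching $-\tfrac12\cdot\rho_+^2/(\kappa X_t^2)$. The specific exponent $\rho_+(\rho_++4-(-\kappa))/(-4\kappa)$ on $|f_t'(x)|$ is tuned precisely so this cancellation works; the $1/Y_t^2$ case is symmetric. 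Hence $\log(M_t/M_0)=N_t-\tfrac12\langle N\rangle_t$, so $M_t/M_0=\mathcal{E}(N)_t$ is a positive local martingale on $[0,\tau_x\wedge\tau_y)$.

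Finally, Girsanov gives that under the measure weighted by $M_t/M_0$, $\tilde B_t:=B_t-\langle B,N\rangle_t$ is a Brownian motion. Since $d\langle B,N\rangle_t=\big(\rho_+/(\sqrt\kappa\,X_t)+\rho_-/(\sqrt\kappa\,Y_t)\big)dt$, the driver becomes
$$d\lambda_t=\sqrt\kappa\,d\tilde B_t+\tfrac{\rho_+}{X_t}dt+\tfrac{\rho_-}{Y_t}dt=\sqrt\kappa\,d\tilde B_t-\tfrac{\rho_+\,dt}{\lambda_t-f_t(x)}-\tfrac{\rho_-\,dt}{\lambda_t-f_t(y)},$$
which is exactly the defining SDE (\ref{BSLE-force}) for BSLE$_\kappa(\rho_+,\rho_-)$ from $(0;x,y)$. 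The only non-bookkeeping step is the drift cancellation in the It\^o computation; once the exponents are correctly grouped, it is a one-line verification per coefficient, so I expect no substantive obstacle.
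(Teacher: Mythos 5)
Your computation is correct, and it is exactly the verification the paper alludes to but omits: the Remark following the proposition refers to \cite[Theorem 6]{SW} with $g_t$ replaced by $f_t$ and $\kappa$ by $-\kappa$, and your It\^o/Girsanov calculation (drift of $\log M$ equal to $-\frac12 d\langle N\rangle$, then reading off the drift $\rho_+/X_t+\rho_-/Y_t$ of the driver) is precisely that standard argument; all the coefficient cancellations, including the role of the exponent $\frac{\rho_\pm(\rho_\pm+4+\kappa)}{-4\kappa}$ on the derivative factors, check out. Two minor points: the factor $|f_t'(-y)|$ in the statement should be read as $|f_t'(y)|$ (consistent with the later identity $M_{t_0+t}(x,y)=f_{t_0}'(x)f_{t_0}'(y)M^{\lambda^{t_0}_\cdot}_t(X_{t_0},Y_{t_0})$ in Section 3), which is what your differentials use; and since $M_t/M_0$ is only a local martingale, the Girsanov step should be understood via localization up to stopping times, in line with the local absolute continuity framework of Proposition \ref{prop: decomp 2.3}.
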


\begin{remark} The proposition is similar to \cite[Theorem 6]{SW}. So we omit its proof. One way to check that the formula here is correct is to note that it differs from the formula of \cite[Theorem 6]{SW} in the case that $n=2$, $z_1=x$, $z_2=y$, $\rho_1=\rho_+$ and $\rho_2=\rho_-$ in two places: 1) the forward map $g_t$ there are replaced by the backward map $f_t$ here; 2) $\kappa$ is replaced by $-\kappa$. These changes make sense since it has been explained in \cite{RZ} that BSLE$_\kappa$ may be viewed as SLE with negative parameter ${-\kappa}$.
\end{remark}

Proposition \ref{prop: general Greens functions} motivates us to define the $(\rho_+,\rho_-)$-BSLE$_{\kappa}$ Green's function: $$G^{\rho_+,\rho_-}(x,y):=M^{\rho_+,\rho_-}_0(x,y)= |x|^{\frac{\rho_+}{-\kappa}}|y|^{\frac{\rho_-}{-\kappa}}|x-y|^{\frac{\rho_+\rho_-}{-2\kappa}},$$ so that the $M^{\rho_+,\rho_-}_t(x,y)$ in Proposition \ref{prop: general Greens functions} can be expressed by $G^{\rho_+,\rho_-}(X_t,Y_t)f_t'(x)^{q_+}f_t'(y)^{q_-}$ for some scaling exponents $q_\pm\in\R$ depending on $\kappa,\rho_\pm$.

Suppose now that $\lambda$ is the driving function of  a BSLE$_{\kappa}(\rho_+,\rho_-)$ process started from $(0;x,y)$ with lifetime $[0,T)$.
From (\ref{eq: backward leq}) we know that
\BGE \pa_t(X_t-Y_t)^2= \pa_t(f_t(x)-f_t(y))^2=\frac{4(X_t-Y_t)^2}{X_tY_t}=-\frac{4(|X_t|+|Y_t|)^2}{|X_t||Y_t|}\le -16, \quad 0\le t<T.\label{X-Y}\EDE
So $T\le (x-y)^2/16$. Since $f_t(x)>\lambda_t>f_t(y)$, $f_t(x)$ is decreasing, and $f_t(y)$ is increasing on $[0,T)$, we see that both $\lim_{t \to T} f_t(x)$ and $\lim_{t \to T} f_t(y)$ converge. As $t\to T$, either $f_t(x)-\lambda_t\to 0$ or $f_t(y)-\lambda_t\to 0$, for otherwise the process can be extended beyond $T$. In either case, we have the convergence of $\lim_{t \to T}\lambda_t$.

\begin{definition} \label{extended} If $\lambda$ is the driving function of  a BSLE$_{\kappa}(\rho_+,\rho_-)$ process started from $(0;x,y)$ with lifetime $[0,T)$, we may extend $\lambda$ continuously to $\R$ such that $\lambda(T+t)-\lambda(T)=\sqrt{\kappa} \ha B_t$, $t\ge 0$, for some standard Brownian motion $\ha B_t$ that is independent of $\lambda(t)$, $0\le t<T$. The backward Loewner process driven by such extended $\lambda$ is called an extended BSLE$_{\kappa}(\rho_+,\rho_-)$ process started from $(0;x,y)$.
\end{definition}

Define $u(t)=-\frac{\kappa}{2} \ln( \frac{X_t-Y_t}{x-y} )$ on $[0,T)$.  From (\ref{X-Y}) we know $u(0)=0$ and $u'(t)=\frac{-\kappa}{X_tY_t}$.  Let
$$W_t=\frac{X_t+Y_t}{X_t-Y_t}\in(-1,1),\quad 0\le t<T.$$
%If $\tau_x<\tau_y$ (resp. $\tau_y-\tau_x$), then as $t\to\tau_{x,y}$, $W_t$ tends to $-1$ (resp. $1$).
 If $\lim_{t\to T} X_t=0>\lim_{t\to T} Y_t$, then $\lim_{t\to T} W_t=-1$; if $\lim_{t\to T} X_t>0=\lim_{t\to T} Y_t$, then $\lim_{t\to T} W_t=1$.
 Let $\ha W_t=W_{u^{-1}(t)}$ be the time-change of $W$ via $u$. A straightforward It\^o's calculation (cf.\ \cite{RY}) using (\ref{eq: backward leq},\ref{BSLE-force}) shows that $\ha W$ satisfies
\begin{equation}\label{eq: radial Bessel}
d\ha{W}_t =-\sqrt{1-\ha{W}_t^2} d\ha{B}_t -\frac{\rho_++2}{-\kappa}(\ha W_t+1) dt -\frac{\rho_-+2}{-\kappa}(\ha W_t-1) dt,
\end{equation}
where $\ha{B_s}$ is a standard Brownian motion. Equation (\ref{eq: radial Bessel}) is associated with the cosine of a radial Bessel process, and is studied in \cite[Appendix B, Remark 3]{ZhanE}. The process $\ha W_t$ behaves like a squared Bessel process of dimension $\delta_+=\frac{4(\rho_++2)}{-\kappa}$ near $1$, and a squared Bessel process of dimension $\delta_-=\frac{4(\rho_-+2)}{-\kappa}$ near $-1$. Thus, if $\rho_\pm\le -\frac\kappa 2-2$, then $\delta_\pm\ge 2$, and $\ha W_t$ does not tend to either $1$ or $-1$ as $t\to u(T)$, which implies that $W_t$ does not tend to $1$ or $-1$ as $t\to T$. So we obtain the following lemma.

\begin{proposition}\label{lem: welding as}
Suppose $\rho_+,\rho_- \leq -\frac\kappa 2-2$. Then for a BSLE$_{\kappa}(\rho_+,\rho_-)$ process started from $(0;x,y)$ with lifetime $[0,T)$, we have a.s.\ $\lim_{t\to T}X_t=0=\lim_{t\to T}Y_t$ and $u(T):=\lim_{t\to T} u(t)=-\frac{\kappa}{2} \ln\( \frac{X_T-Y_T}{x-y} \)=\infty$. %Thus, for an extended BSLE$_{\kappa}(\rho_+,\rho_-)$ process started from $(0;x,y)$, we have $\tau_x=\tau_y$; and if $\kappa\le 4$, then the welding $\phi$ generated by this process satisfies $\phi(x)=y$ and $\phi(y)=x$.
\end{proposition}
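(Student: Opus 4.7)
The plan is to formalize the remark preceding the statement by reducing the question to non-attainment of $\pm 1$ by the time-changed process $\ha W$ in (\ref{eq: radial Bessel}), which is standard in the regime $\delta_\pm\ge 2$.

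First I would identify exactly what must be ruled out. Since $X_t$ is decreasing and $Y_t$ is increasing on $[0,T)$, the limits $X_T=\lim_{t\to T^-}X_t\ge 0$ and $Y_T=\lim_{t\to T^-}Y_t\le 0$ exist, and the definition of $T$ forces $\min(X_T,-Y_T)=0$. The identity $u(t)=-\frac\kappa 2\ln((X_t-Y_t)/(x-y))$ shows that $u(T)=\infty$ is equivalent to $X_T=Y_T=0$. So the statement will follow once I rule out the two events
$$A_+=\{X_T=0,\ Y_T<0\}\quad\text{and}\quad A_-=\{X_T>0,\ Y_T=0\}.$$
On $A_+$, the formula $W_t=(X_t+Y_t)/(X_t-Y_t)$ gives $W_t\to -1$ as $t\to T^-$, while $u(T)=-\frac\kappa 2\ln\frac{-Y_T}{x-y}<\infty$, so the time-changed process satisfies $\ha W_s\to -1$ as $s\to u(T)^-$; symmetrically on $A_-$ one gets $\ha W_s\to 1$.

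The technical heart is then to show that both of these limiting behaviors occur with probability zero. Setting $V_s=2(1-\ha W_s)$ and applying It\^o's formula to (\ref{eq: radial Bessel}), a routine computation gives
$$dV_s=2\sqrt{V_s}\,d\ha B'_s+\delta_+\,ds+O(V_s)\,ds\quad\text{near }V_s=0,$$
i.e., $V$ matches a squared Bessel process of dimension $\delta_+=\frac{4(\rho_++2)}{-\kappa}$ to leading order. Under the hypothesis $\rho_+\le -\frac\kappa 2-2$ we have $\delta_+\ge 2$, the critical regime in which a standard comparison argument rules out the boundary being reached in finite time. The analysis near $-1$ via $V_s'=2(1+\ha W_s)$ is symmetric with $\delta_-\ge 2$. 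Both non-attainment statements are recorded in \cite[Appendix B, Remark 3]{ZhanE}, so I would simply cite them. It follows that $\ha W_s\not\to\pm 1$ at any finite time almost surely, hence $\P[A_+\cup A_-]=0$.

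Combining the two steps yields $X_T=Y_T=0$ a.s., and consequently $u(T)=+\infty$ by the defining formula. The only nontrivial ingredient is the Bessel non-attainment of $\pm 1$; but this is classical in the critical regime $\delta\ge 2$ and the local It\^o analysis of (\ref{eq: radial Bessel}) has already been carried out in \cite{ZhanE}, so the proof essentially reduces to that citation plus the elementary case analysis above.
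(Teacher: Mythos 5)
Your argument is correct and is essentially the paper's own: the paper proves this proposition via the discussion immediately preceding it, namely the time change to the radial-Bessel-cosine SDE (\ref{eq: radial Bessel}), the identification of the boundary behavior with squared Bessel processes of dimensions $\delta_\pm=\frac{4(\rho_\pm+2)}{-\kappa}\ge 2$, and non-attainment of $\pm1$ as recorded in \cite[Appendix B, Remark 3]{ZhanE}. Your explicit observation that $u(T)<\infty$ on the bad events $A_\pm$, so that only finite-time non-attainment is needed, is a minor clarification of the same reasoning rather than a different route.
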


\begin{remark} \label{<-kappa/2-2}
Suppose $\rho_+,\rho_- \leq -\frac\kappa 2-2$. From the proposition we see that, if $\lambda$ is the driving function for an extended BSLE$_\kappa(\rho_+,\rho_-)$ process started from $(0;x,y)$, and if $T$ is the lifetime of the unextended portion of the process, then $x$ and $y$ are both swallowed at the time $T$ by the extended process. So we get $c_T=y$ and $d_T=x$, i.e., $\Phi^{\lambda}(T)=(x,y)$. Moreover, since $u(T)=\infty$,
we may recover $T$ from the diffusion process $(\ha W_t)$ in (\ref{eq: radial Bessel}) by the formula
\begin{equation}\label{eq: tau eq 2}
T=\frac{(x-y)^2}{4 \kappa}\int_0^{\infty} e^{-\frac{4}{\kappa}t}(1-\ha{W}_t^2)dt.
\end{equation}
%Here if the lifetime $T_{\ha W}$ of $(\ha W_t)$ is finite, then $\lim_{t \to T_{\ha W}}\ha W_t$ equals to $1$ or $-1$, and we understand that $\ha W$ takes constant value $\lim_{t \to T_{\ha W}}\ha W_t$ on $[T_{\ha W},\infty)$, i.e., there is no contribution to the integral after $T_{\ha W}$.
\end{remark}

%Such process generates a conformal welding if $\kappa\le 4$.

\subsection{Processes with a random lifetime}\label{subsection: random lifetime}
We now review the framework introduced in $\cite{Zhan decomp}$ to study stochastic processes with a random lifetime. We will need to introduce the notation which will be used in this paper, and several propositions will be stated without proof. For complete details, the reader can refer to \cite{Zhan decomp}.

Define the space
$$\Sigma = \bigcup_{0 < T \leq \infty} C[0,T),$$ where $C[0,T)$ is the set of real valued functions which are continuous on $[0,T)$.
For each $f\in\Sigma$, let $T_f$ denote the lifetime of $f$, i.e., $[0,T_f)$ is the definition domain of $f$. For $0\le t<\infty$, let $\Sigma_t=\{f\in\Sigma:T_f>t\}$. We define a filtration $(\F_t)$ on $\Sigma$ such that
$${\F}_t=\sigma\(\{ f \in \Sigma_s, f(s) \in U \}, s \leq t, U \subset \R \text{ is measurable}\).$$
The global $\sigma$-field on $\Sigma$ is $\F=\sigma(\F_t:0\le t<\infty)$. If $\mu,\nu$ are measures on $(\Sigma,{\F})$, then we say that $\nu$ is locally absolutely continuous with respect to $\mu$ if for every $t>0$, $\nu|_{\cal{F}_t \cap \Sigma_t}$ is absolutely continuous with respect to $\mu|_{\cal{F}_t \cap \Sigma_t}$. We will use the notation $\nu \ll \mu$ to mean (global) absolute continuity and $\nu \lhd \mu$ to mean local absolute continuity.

We will  define a few operations on this space, which all measurable on $(\Sigma,\cal{F})$. 
\begin{itemize}
\item \textit{Killing}: For $0<\tau \leq \infty$, define $\cal{K}_{\tau}: \Sigma \to \Sigma$ by $\cal{K}_{\tau}(f)=f|_{[0,\tau_f)}$, where $\tau_f=\min\{\tau, T_f\}$.
\item \textit{Continuation}: Define subspaces of $\Sigma$ by $\Sigma^{\oplus}=\{ f \in \Sigma : T_f<\infty, f(T^-):=\lim_{t \to T_f^-}f(t) \in \R \}$ and $\Sigma_{\oplus}=\{ f \in \Sigma: f(0)=0 \}$. Then we define $\oplus:\Sigma^{\oplus}\times \Sigma_{\oplus} \to \Sigma$ by
$$ f \oplus g (t)= \begin{cases}
f(t), & 0 \leq t < T_f\\
f(T_f^-)+g(t-T_f), & T_f \leq t < T_f+T_g
\end{cases}.$$
\item \textit{Time marked continuation}: Define $\ha{\oplus}: \Sigma^{\oplus} \times \Sigma_{\oplus} \to \Sigma \times [0,\infty)$ by $f \ha{\oplus} g=(f \oplus g, T_f)$. % Then $\ha{\oplus}$ records the information of where the first function ends and the second begins.
\end{itemize}

We will also have to work with random measures, which are called kernels. More precisely, suppose $(U, \cal{U})$ and $(V,\cal{V})$ are measurable spaces. A kernel from $(U,\cal{U})$ to $(V,\cal{V})$ is a map $\nu:(U,\cal{V}) \to [0,\infty)$ such for each $u \in U$, $\nu(u, \cdot): \mathcal{V} \to [0,\infty)$ is a measure and for each $E \in \mathcal{V}$, the function $\nu(\cdot, E): U \to [0,\infty)$ is $\mathcal{U}$-measurable. If $\mu$ is a measure on $(U,\mathcal{U})$, then $\nu$ is called a $\mu$-kernel if it is a kernel on the $\mu$-completion of $(U,\mathcal{U})$. We say that $\nu$ is a finite $\mu$-kernel if $\nu(u,V)<\infty$ for $\mu$-a.s. $u \in \mathcal{U}$, and we say $\nu$ is a $\sigma$-finite $\mu$-kernel if $V=\bigcup_{n=1}^{\infty}F_n$ such that for each $n$, for $\mu$-a.s. $u \in U$, we have $\nu(u,F_n) < \infty$.

Combining kernels with measures, we have the following operations for measures:
\begin{itemize}
\item If $\mu$ is a $\sigma$-finite measure on $(U,\mathcal{U})$ and $\nu$ is a $\sigma$-finite $\mu$-kernel from $(U,\mathcal{U})$ to $(V,\mathcal{V})$, then we define $\mu \otimes \nu$ on $U \times V$ by
$$\mu \otimes \nu(E \times F) = \int_E \nu(u,F)d\mu(u).$$ In this case, $\mu\cdot\nu:=\int \nu(X,\cdot)\mu(dX)$ is the marginal measure of $\mu\otimes \nu$ on $V$.
\item If $\nu$ is a $\sigma$-finite measure on $V$ and $\mu$ is a $\sigma$-finite $\nu$-kernel from $(V,\mathcal{V})$ to $(U,\mathcal{U})$, we define $\mu \overleftarrow\otimes \nu$ on $U \times V$ to be the pushforward measure of $\nu\otimes \mu$ under the map $(v,u)\mapsto (u,v)$.
%\item If $\nu$ is a $\sigma$-finite $\mu$-kernel from $\Sigma$ to $(0,\infty)$, define a measure $\mathcal{K}_{\nu}(\mu)$ on $\Sigma$ to be the pushforward measure of $\mu \otimes \nu$ under the map $\mathcal{K}:\Sigma \times (0,\infty) \to \Sigma$ given by $(f, r) \to \mathcal{K}_r(f)$.
\end{itemize}

\begin{remark}
  One may sample $(X,Y)$ according to the ``law'' $\mu\otimes\nu$ by two steps. First, sample $X$ according to the ``law'' $\mu$. Second, given $X$, sample $Y$ according to the ``law'' $\nu(X,\cdot)$. We use the quotation marks because the $\mu$ and $\nu(X,\cdot)$ are in general not probability measures. One should be careful that the marginal ``law'' of $X$ is changed after the second step unless  for $\mu$-a.s. $X$, $\nu(X,\cdot)$ is a probability measure.
\label{sample}
\end{remark}
 
Combining the operations on $\Sigma$ and the language of kernels, we have the following operations for measures.

\begin{itemize}
\item If $\nu$ is a $\sigma$-finite $\mu$-kernel from $\Sigma$ to $(0,\infty)$, define a measure $\mathcal{K}_{\nu}(\mu)$ on $\Sigma$ to be the pushforward measure of $\mu \otimes \nu$ under the map $\mathcal{K}:\Sigma \times (0,\infty) \to \Sigma$ given by $(f, r) \to \mathcal{K}_r(f)$.
\item If $\mu$ is a $\sigma$-finite measure supported by $\Sigma^\oplus$, and $\nu$ is a $\sigma$-finite $\mu$-kernel from $\Sigma^\oplus$ to $\Sigma_\oplus$, then $\mu \oplus \nu$ and $\mu\ha{\oplus}\nu$ are defined by the pushforward measures $\oplus_*(\mu \otimes \nu)$ and $\ha\oplus_*(\mu\otimes\nu)$, respectively. 
\end{itemize}

The following propositions are \cite[Propositions 2.1 and 2.2]{Zhan decomp}.
\begin{proposition}\label{prop: decomp prop 2.1}
Let $\mu$ be a measure on $(\Sigma, \mathcal{F})$ which is $\sigma$-finite on $\mathcal{F}_0$. Let $(\Upsilon, \mathcal{G})$ be a measurable space. Let $\nu: \Upsilon \times \mathcal{F} \to [0,\infty]$ be such that for every $v \in \Upsilon$, $\nu(v, \cdot)$ is a finite measure on $\mathcal{F}$ that is locally absolutely continuous  with respect to $\mu$. Moreover, suppose that the local Radon-Nikodym derivatives are equal to $(M_t(v, \cdot))_{t>0}$, where $M_t:\Upsilon \times \Sigma \to [0,\infty)$ is $\mathcal{G}\times \mathcal{F}_t$ measurable for every $t \geq 0$. The $\nu$ is a kernel from $(\Upsilon, \mathcal{G})$ to $(\Sigma,\mathcal{F})$. Moreover, if $\xi$ is a $\sigma$-finite measure on $(\Upsilon, \mathcal{G})$ such that $\mu$-a.s., $\int_{\Upsilon}M_t(v, \cdot)d\xi(v)<\infty$, then $\xi \cdot \nu \lhd \mu$, and the local Radon-Nikodym derivatives are $\int_{\Upsilon} M_t(v,\cdot)d\xi(v)$ for $0 \leq t <\infty$.
\end{proposition}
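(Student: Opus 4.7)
The statement has two parts: (i) $\nu$ is a kernel, namely $v\mapsto\nu(v,E)$ is $\mathcal{G}$-measurable for every $E\in\mathcal{F}$; and (ii) the integrated measure $\xi\cdot\nu$ is locally absolutely continuous with respect to $\mu$, with the stated local densities. I would treat them in that order.

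For (i), the local Radon--Nikodym hypothesis asserts that for every $t>0$ and every $E\in\mathcal{F}_t$ with $E\subset\Sigma_t$,
$$\nu(v,E)=\int_E M_t(v,f)\,d\mu(f).$$
Since $M_t$ is jointly $\mathcal{G}\otimes\mathcal{F}_t$-measurable and $\mu$ is $\sigma$-finite on $\mathcal{F}_0\subset\mathcal{F}_t$, Fubini--Tonelli makes the right-hand side $\mathcal{G}$-measurable in $v$. To pass to arbitrary $E\in\mathcal{F}$, I would run a monotone class argument: the class of $E$ for which $\nu(\cdot,E)$ is $\mathcal{G}$-measurable is a Dynkin system (using finiteness of each $\nu(v,\cdot)$ to handle complements), and the family $\bigcup_{t>0}(\mathcal{F}_t\cap\Sigma_t)$ is a $\pi$-system that generates $\mathcal{F}$ (the generators $\{f\in\Sigma_s,\ f(s)\in U\}$ of $\mathcal{F}$ already lie in $\mathcal{F}_s\cap\Sigma_s$). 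Dynkin's theorem then extends measurability to all of $\mathcal{F}$.

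For (ii), fix $t>0$ and $A\in\mathcal{F}_t\cap\Sigma_t$. Combining the definition $(\xi\cdot\nu)(A)=\int_\Upsilon\nu(v,A)\,d\xi(v)$ with the local density formula and applying Fubini--Tonelli to the non-negative integrand $M_t$ (using $\sigma$-finiteness of $\xi$ and $\mu|_{\mathcal{F}_t}$), one gets
$$(\xi\cdot\nu)(A)=\int_\Upsilon\int_A M_t(v,f)\,d\mu(f)\,d\xi(v)=\int_A\!\left(\int_\Upsilon M_t(v,f)\,d\xi(v)\right)d\mu(f).$$
By hypothesis the inner integral is $\mu$-a.s.\ finite in $f$, so it serves as a Radon--Nikodym derivative of $(\xi\cdot\nu)|_{\mathcal{F}_t\cap\Sigma_t}$ against $\mu|_{\mathcal{F}_t\cap\Sigma_t}$. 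Since $t>0$ is arbitrary, this gives $\xi\cdot\nu\lhd\mu$ with the claimed local densities.

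The main technical point will be the monotone class step in (i): the hypothesis only delivers $\mathcal{G}$-measurability of $\nu(v,E)$ for $E$ in the ``slices'' $\mathcal{F}_t\cap\Sigma_t$, and one has to verify that these slices form a $\pi$-system that generates $\mathcal{F}$. This hinges on $\Sigma_t\in\mathcal{F}_t$ (from the generators of $\mathcal{F}_t$ with $U=\mathbb{R}$) and on every path in $\Sigma$ having strictly positive lifetime, so that $\Sigma=\bigcup_n\Sigma_{1/n}$. Once these are in hand, both the Dynkin extension and the Fubini swap in (ii) are routine.
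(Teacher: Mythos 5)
Your proof is correct. The paper itself states Proposition \ref{prop: decomp prop 2.1} without proof, importing it from \cite{Zhan decomp}, and your argument — Fubini--Tonelli giving $\mathcal{G}$-measurability of $\nu(\cdot,E)$ on the slices $\mathcal{F}_t\cap\Sigma_t$, a $\pi$--$\lambda$ extension to all of $\mathcal{F}$ (using $\Sigma_t\in\mathcal{F}_t$, finiteness of $\nu(v,\cdot)$, and $\Sigma=\bigcup_n\Sigma_{1/n}$ so that the whole space lies in the Dynkin class), followed by a Tonelli swap to identify $\int_{\Upsilon}M_t(v,\cdot)\,d\xi(v)$ as the local density — is exactly the standard route that the cited proof takes, with the genuine technical points correctly identified and handled.
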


\begin{proposition}\label{prop: decomp prop 2.2}
Let $\mu$ be a probability measure on $(\Sigma,\mathcal{F})$. Let $\xi$ be a $\mu$-kernel from $(\Sigma, \mathcal{F})$ to $(0,\infty)$ that satisfies $\E_{\mu}[\xi((0,\infty))]<\infty$. Then $\mathcal{K}_{\xi}(\mu) \lhd \mu$, and the local Radon-Nikodym derivatives are $\E_{\mu}[\xi((t,\infty))|\mathcal{F}_t]$ for $0 \leq t <\infty$.
\end{proposition}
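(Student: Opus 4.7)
The plan is to unwind the definition of $\mathcal{K}_\xi(\mu)$ as a pushforward measure and read the local density off by hand. Fix $t \geq 0$ and an event $A \in \mathcal{F}_t$. By definition,
\begin{equation*}
\mathcal{K}_\xi(\mu)(A \cap \Sigma_t) \;=\; (\mu \otimes \xi)\bigl\{(f,r) \in \Sigma \times (0,\infty) : \mathcal{K}_r(f) \in A \cap \Sigma_t \bigr\}.
\end{equation*}
The key pointwise observation is that the lifetime of $\mathcal{K}_r(f)$ is $\min(r, T_f)$, so $\mathcal{K}_r(f) \in \Sigma_t$ is equivalent to $r > t$ and $T_f > t$. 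On this event $\mathcal{K}_r(f)$ and $f$ agree on $[0,t]$ and both survive past $t$; since $A \in \mathcal{F}_t$ and since $\Sigma_t = \{f \in \Sigma_t, f(t) \in \mathbb{R}\} \in \mathcal{F}_t$, membership of either path in $A$ depends only on $f|_{[0,t]}$ together with $\{T_f > t\}$, so $\mathcal{K}_r(f) \in A \Leftrightarrow f \in A$. The integration domain therefore simplifies to $(A \cap \Sigma_t) \times (t,\infty)$.

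Next, I would invoke Fubini (justified by $\sigma$-finiteness of the kernel and the hypothesis $\mathbb{E}_\mu[\xi((0,\infty))] < \infty$) to integrate in $r$ first, obtaining
\begin{equation*}
\mathcal{K}_\xi(\mu)(A \cap \Sigma_t) \;=\; \int_{A \cap \Sigma_t} \xi\bigl(f, (t,\infty)\bigr) \, d\mu(f).
\end{equation*}
Because $A \cap \Sigma_t \in \mathcal{F}_t$ and $\xi((0,\infty)) \in L^1(\mu)$, the defining property of conditional expectation rewrites the right-hand side as $\int_{A \cap \Sigma_t} \mathbb{E}_\mu\bigl[\xi((t,\infty)) \mid \mathcal{F}_t\bigr] \, d\mu$. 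Since this identity holds for every $A \in \mathcal{F}_t$, we have identified $\mathbb{E}_\mu[\xi((t,\infty)) \mid \mathcal{F}_t]$ as the Radon--Nikodym derivative of $\mathcal{K}_\xi(\mu)|_{\mathcal{F}_t \cap \Sigma_t}$ with respect to $\mu|_{\mathcal{F}_t \cap \Sigma_t}$, which is the desired local absolute continuity.

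The entire argument is a short bookkeeping exercise with no probabilistic estimates or limiting procedures. The routine things to verify are: measurability of $f \mapsto \xi(f,(t,\infty))$, which is built into the definition of a kernel; that the preimage identification survives the passage to the $\mu$-completion implicit in the notion of a $\mu$-kernel; and that the conditional expectation is well-defined, for which the integrability hypothesis on $\xi((0,\infty))$ is exactly what is required. The only genuine subtlety, and the one place I would slow down and be careful, is to track the event $\Sigma_t$ on both sides throughout, so that the identity is recognized as a Radon--Nikodym derivative on $\mathcal{F}_t \cap \Sigma_t$ rather than merely on $\mathcal{F}_t$ (where the measures would generally not be comparable, since killed paths contribute no mass to $\Sigma_t^c$-events coming from $\mu$).
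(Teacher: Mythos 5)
Your argument is correct: unwinding the pushforward definition of $\mathcal{K}_\xi(\mu)$, identifying $\{(f,r):\mathcal{K}_r(f)\in A\cap\Sigma_t\}=(A\cap\Sigma_t)\times(t,\infty)$ for $A\in\mathcal{F}_t$ (using that paths agreeing on $[0,t]$ and surviving past $t$ lie in the same $\mathcal{F}_t$-events), and then applying the definition of $\mu\otimes\xi$ and of conditional expectation is exactly the intended bookkeeping, and your care with the trace $\sigma$-field $\mathcal{F}_t\cap\Sigma_t$ and with integrability is appropriate. The paper itself states this proposition without proof, citing \cite[Proposition 2.2]{Zhan decomp}, and your direct verification is essentially the standard argument given there.
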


For $\kappa>0$, we use $\P^\kappa_{B}$ to denote the law of $(\sqrt{\kappa}B_t)_{t >0}$, and use $\E^\kappa_B$ to denote the corresponding expectation, where $(B_t)_{t>0}$ is a standard one dimensional Brownian motion. We will omit $\kappa$ when it is fixed in the context. Then $\P_{B}$ is supported on $C[0,\infty) \cap \Sigma_{\oplus}$. %We will use $\E^\kappa_{B}$ to be the expectation under the measure $\P_{B}$, and $\mathcal{F}_t^B$ to be the completion of $\mathcal{F}_t$ under $\P_{B}$ for $ 0 \leq t$.

The following propositions are \cite[Propositions 2.3 and 2.4]{Zhan decomp}. The first extends Girsanov theorem to a statement about local absolute continuity, and the second extends the strong Markov property of Brownian motion.

\begin{proposition}\label{prop: decomp 2.3}
Let $\P^\kappa_\sigma$ denote the law of the process $(X_t)_{0 \leq t <T}$, which satisfies the SDE:
	$$dX_t=\sqrt{\kappa}dB_t + \sigma_t dt, \quad 0 \leq t <T, \quad X_0=0,$$
	where $T$ is a stopping time.
Suppose that $(M_t)_{0\le t<T}$ is a positive local martingale that satisfies the SDE
%\begin{equation}\label{eq: decomp 2.2}
$$dM_t=M_t \frac{\sigma_t}{\sqrt{\kappa}}dB_t, \quad 0 \leq t < T.$$ 
%\end{equation}
 Then $\P^{\kappa}_\sigma \lhd \P^\kappa_{B}$, and the local Radon-Nikodym derivatives are
%\begin{equation}\label{eq: decomp 2.3}
$$\frac{d\P^{\kappa}_\sigma|_{\mathcal{F}_t \cap \Sigma_t}}{d\P^\kappa_{B}|_{\mathcal{F}_t \cap \Sigma_t}}= \I_{\{T>t\}}\frac{M_t}{M_0}, \quad 0 \leq t < \infty.$$
%\end{equation}
\end{proposition}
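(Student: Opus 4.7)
The plan is to reduce to the classical Girsanov theorem by a localization argument that handles both the random lifetime $T$ and the fact that $M$ is only a local martingale under $\P^\kappa_B$. I would first introduce localizing stopping times
$$\tau_n := \inf\{t\in[0,T): M_t\notin[1/n,n]\}\wedge n,$$
with the convention that the infimum equals $T$ when the set is empty. Each $\tau_n$ is a stopping time of the Brownian filtration, the sequence is nondecreasing in $n$, and on $[0,\tau_n]$ the process $M$ takes values in $[1/n,n]$, so $M^{\tau_n}:=(M_{t\wedge\tau_n})_{t\ge 0}$ is a bounded positive, hence uniformly integrable, martingale.

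Next I would apply the classical Girsanov theorem on the stopped interval $[0,\tau_n]$. The probability measure $\mathbb{Q}_n$ defined by $d\mathbb{Q}_n/d\P^\kappa_B = M_{\tau_n}/M_0$ has the property that under $\mathbb{Q}_n$ the process $\widetilde B_t := B_t - \int_0^{t\wedge\tau_n}(\sigma_s/\sqrt\kappa)\,ds$ is a standard Brownian motion. Consequently $X_t=\sqrt\kappa B_t$ satisfies $dX_t = \sqrt\kappa\,d\widetilde B_t + \sigma_t\,dt$ on $[0,\tau_n]$, so the joint law of $(X_t)_{0\le t\le \tau_n}$ together with the stopping time $\tau_n$ under $\mathbb{Q}_n$ agrees with the analogous object under $\P^\kappa_\sigma$. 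For any $t>0$ and any bounded $\mathcal{F}_t\cap\Sigma_t$-measurable functional $F$, this yields
$$\E^\kappa_B\!\left[F\cdot \I_{\{\tau_n>t\}}\,\frac{M_t}{M_0}\right] = \E^\kappa_\sigma\!\left[F\cdot \I_{\{\tau_n>t\}}\right],$$
where I have used that $M_{\tau_n\wedge t}=M_t$ on $\{\tau_n>t\}$.

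To conclude, I would send $n\to\infty$. Under $\P^\kappa_\sigma$ we have $\tau_n\uparrow T$ almost surely, because $M$ is strictly positive and continuous on $[0,T)$ and hence stays in a compact subset of $(0,\infty)$ on every compact subinterval of $[0,T)$. Under $\P^\kappa_B$ the integrand on the left is nondecreasing in $n$, dominated by $\I_{\{T>t\}}\,M_t/M_0$, and converges to that density up to a $\P^\kappa_B$-null set (the paths where $M$ would explode or vanish strictly before $\min(T,t)$, a null set by continuity of $M$ on $[0,T)$). Monotone convergence on both sides then delivers
$$\E^\kappa_B\!\left[F\cdot \I_{\{T>t\}}\,\frac{M_t}{M_0}\right] = \E^\kappa_\sigma\!\left[F\cdot \I_{\{T>t\}}\right],$$
which is exactly the desired local absolute continuity with Radon-Nikodym derivative $\I_{\{T>t\}}\,M_t/M_0$ on $\mathcal{F}_t\cap\Sigma_t$.

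The main obstacle is the asymmetric behavior of $M$ under the two measures: under $\P^\kappa_\sigma$ the process $M$ is effectively a bona fide positive martingale on $[0,T)$, so $\tau_n\uparrow T$ pointwise, whereas under $\P^\kappa_B$ the local martingale $M$ can in principle explode or approach $0$ strictly before $T$, so $\tau_n$ need not converge to $T$ pointwise. The resolution is that on such exceptional $\P^\kappa_B$-paths the candidate density $\I_{\{T>t\}}\,M_t/M_0$ is still well-defined by continuity of $M$ where it exists, and the monotone-convergence step therefore goes through without requiring $\tau_n\uparrow T$ pointwise under $\P^\kappa_B$.
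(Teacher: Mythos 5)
The paper itself gives no proof of this proposition: it is quoted from \cite[Proposition 2.3]{Zhan decomp}, and your localization-plus-Girsanov scheme is exactly the standard argument for such statements, so the route is the expected one. Your first two steps are sound: stopping $M$ at $\tau_n$ yields a bounded, hence uniformly integrable, positive martingale; the classical Girsanov theorem applied to $\mathbb{Q}_n$ with density $M_{\tau_n}/M_0$ shows that the canonical process has the $\sigma$-drift dynamics up to $\tau_n$; and since $\{\tau_n>t\}\in\mathcal{F}_t$, optional sampling gives $\E^\kappa_B[F\,\I_{\{\tau_n>t\}}M_t/M_0]=\E^\kappa_\sigma[F\,\I_{\{\tau_n>t\}}]$. (The identification of the tilted law with $\P^\kappa_\sigma$ up to $\tau_n$ tacitly uses that $T$, $\sigma$, $M$ are adapted path functionals and that the SDE determines the law of $X$ up to $\tau_n$; this is the same level of informality as the statement itself, and it is harmless in the application (\ref{BSLE-force}).)

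The problem is your final paragraph. You claim that under $\P^\kappa_B$ the times $\tau_n$ need not increase to $T$, and that this does not matter because the candidate density is ``still well-defined''. That reasoning does not work: if the event $\{\sup_n\tau_n\le t<T\}$ had positive $\P^\kappa_B$-measure, then the monotone limit of $\I_{\{\tau_n>t\}}M_t/M_0$ would be $\I_{\{\sup_n\tau_n>t\}}M_t/M_0$, strictly smaller than $\I_{\{T>t\}}M_t/M_0$ on that event (where $M_t>0$), and the limiting identity would fail; well-definedness of the density is beside the point. The correct and simpler observation---which you in fact already stated parenthetically in the previous paragraph---is that no such exceptional paths exist up to a null set: by hypothesis $M$ is $\P^\kappa_B$-a.s.\ continuous and strictly positive on all of $[0,T)$, so on $\{T>t\}$ one has $0<\inf_{[0,t]}M\le\sup_{[0,t]}M<\infty$, whence $\tau_n>t$ for all large $n$; combined with $\tau_n\le T\wedge n$ this gives $\tau_n\uparrow T$ a.s.\ under $\P^\kappa_B$ just as under $\P^\kappa_\sigma$. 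Monotone convergence for $F\ge 0$ (or dominated convergence after splitting a signed bounded $F$) then yields the stated local Radon--Nikodym derivative. With the last paragraph replaced by this observation, your proof is complete and matches the approach of the cited source.
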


%If $(\theta)_t$ is an adapted increasing process, it induces a kernel from $\Sigma$ to $[0,\infty)$ defined by $d\theta_{\cdot}(f)$, which is the random measure induced by the monotonic function $t\mapsto\theta_t(f)$ on $t \in [0,\infty)$.

\begin{proposition}\label{prop: decomp prop 2.4}
Let $(\theta_t)_{0 \leq t < \infty}$ be a right-continuous increasing adapted process defined on $\Sigma$ that satisfies $\theta_0=\theta_{0^+}=0$ and $\E^\kappa_{B}[\theta_{\infty}]<\infty$. Let $d_\theta$ be the kernel such that $d_\theta(f,\cdot)$ is the measure induced by the monotonic function $t\mapsto\theta_t(f)$ on $[0,\infty)$. Then
%\begin{equation}
$$\mathcal{K}_{d\theta}(\P^\kappa_{B})\ha{\oplus}\P^\kappa_{B}=\P^\kappa_{B} \otimes d\theta.$$
%\end{equation}
Thus, $\mathcal{K}_{d\theta} \oplus \P^\kappa_{B} \ll \P^\kappa_{B}$, and $\theta_{\infty}$ is the Radon-Nikodym derivative.
\end{proposition}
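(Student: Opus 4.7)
My plan is to prove the measure identity $\mathcal{K}_{d\theta}(\P_B)\,\ha{\oplus}\,\P_B = \P_B \otimes d\theta$ by pairing both sides with an arbitrary bounded measurable test function $\Phi:\Sigma\times[0,\infty)\to\R$, and then to derive the absolute-continuity statement by taking the first $\Sigma$-marginal. Unfolding the constructions of $\mathcal{K}_\nu(\mu)$ and $\ha{\oplus}$ in Subsection~\ref{subsection: random lifetime}, the identity is equivalent to
\begin{equation*}
\int d\P_B(f) \int d\P_B(g) \int_0^\infty \Phi\bigl(\mathcal{K}_r(f)\oplus g,\,r\bigr)\,d\theta_r(f) = \int d\P_B(f)\int_0^\infty \Phi(f,r)\,d\theta_r(f), \quad (\star)
\end{equation*}
where $f,g$ are independent $\P_B$-paths. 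Given $(\star)$, taking $\Phi(h,r)=\Phi_0(h)$ and projecting onto the $\Sigma$-coordinate gives $\int \Phi_0\,d(\mathcal{K}_{d\theta}(\P_B)\oplus\P_B) = \int \Phi_0(f)\,\theta_\infty(f)\,d\P_B(f)$, which yields $\mathcal{K}_{d\theta}(\P_B)\oplus\P_B\ll\P_B$ with Radon-Nikodym derivative $\theta_\infty$.

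For $(\star)$ the essential inputs are the Markov property of $(\sqrt\kappa B_t)$ and the adaptedness of $\theta$. For each fixed $t\ge 0$ the Markov property says that, conditional on $\F_t$, the post-$t$ increment $(f(t+s)-f(t))_{s\ge 0}$ is an independent $\P_B$-path, so
\begin{equation*}
\int d\P_B(g)\,\Phi\bigl(\mathcal{K}_t(f)\oplus g,\,t\bigr) = \E_B\bigl[\Phi(\cdot,t)\mid\F_t\bigr](f).
\end{equation*}
Substituting this into the LHS of $(\star)$ reduces the problem to the optional-projection identity
\begin{equation*}
\int d\P_B(f) \int_0^\infty \E_B[\Phi(\cdot,t)\mid\F_t](f)\,d\theta_t(f) = \int d\P_B(f)\int_0^\infty \Phi(f,t)\,d\theta_t(f), \quad (\star\star)
\end{equation*}
which states that integration against the right-continuous adapted increasing process $\theta$ commutes with optional projection. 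Since $\E_B[\theta_\infty]<\infty$, $(\star\star)$ is a direct consequence of the classical optional projection theorem. Alternatively, one proves $(\star\star)$ by hand via the Riemann-sum approximation $\sum_k \Phi(f,t_{k+1})(\theta_{t_{k+1}}-\theta_{t_k})$ along dyadic partitions $t_k=k/2^n$: since $\theta_{t_{k+1}}-\theta_{t_k}$ is $\F_{t_{k+1}}$-measurable, the tower property permits inserting the conditional expectation $\E_B[\Phi(\cdot,t_{k+1})\mid\F_{t_{k+1}}]$, and one passes to the limit via right-continuity of $\theta$ and dominated convergence (reducing to $\Phi$ right-continuous in $t$ through a standard monotone-class argument).

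The principal obstacle is $(\star\star)$. A naive ``Fubini $+$ tower property'' argument does not suffice, because the infinitesimal mass of $d\theta_t$ near $t$ lives in $\F_{t^+}$ rather than $\F_t$, so the $f$-dependence of $d\theta$ must be disentangled from the conditional expectation with care. Invoking the optional projection theorem, or carrying out the Riemann-sum argument described above, handles this cleanly. The remaining pieces --- unfolding the definitions of the measure operations, applying the Markov property, and reading off $\theta_\infty$ as the Radon-Nikodym derivative from the first marginal --- are essentially bookkeeping.
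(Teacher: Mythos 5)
Your argument is correct: reducing the identity to the optional-projection identity $(\star\star)$ via the Markov property of $\sqrt\kappa B$ at fixed times $t$ (where $\sigma(\mathcal{K}_t)=\F_t$ on the support of $\P_B$), and then proving $(\star\star)$ either by the optional projection theorem or by right-endpoint Riemann--Stieltjes sums, where the increments $\theta_{t_{k+1}}-\theta_{t_k}$ are $\F_{t_{k+1}}$-measurable and right-continuity of $\theta$ plus a monotone-class reduction handles the limit, is exactly what is needed, and reading off $\theta_\infty$ from the first marginal is immediate. Note that the present paper states Proposition \ref{prop: decomp prop 2.4} without proof, importing it from \cite{Zhan decomp}, whose proof runs on the same discretization-plus-Markov-property scheme, so your route is essentially the same one, with the optional projection theorem serving as a cleaner packaging of the limiting step.
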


\subsection{Laws of driving functions}
Let $\kappa>0$ and $\rho_+,\rho_-\in\R$. For $x>0>y$,
let $\P^{\rho_+,\rho_-}_{x,y}$ be the law of the driving function for the BSLE$_{\kappa}(\rho_+,\rho_-)$ process starting from $(0;x,y)$. We have discussed before Definition \ref{extended} that $\P^{\rho_+,\rho_-}_{x,y}$ is supported by $\Sigma^\oplus$. So $\P^{\rho_+,\rho_-}_{x,y}\oplus \P_{B}$ is well defined, which is the law of an extended BSLE$_{\kappa}(\rho_+,\rho_-)$ process.

%Define $\Sigma^{\mathcal{W}}$ to be the set of $\lambda \in \Sigma$ which generates a conformal welding. The map ${\cal W}$ from $\lambda\in\Sigma^{\cal W}$ to the welding $\phi$ generated by $\lambda$ is called the welding map.
%For example, if $\kappa \leq 4$, $\rho_+,\rho_-\in\R$, and $x>0>y$, then $\P_{B}$, $\P^{\rho_+,\rho_-}_{x,y}$ and $\P^{\rho_+,\rho_-}_{x,y}\oplus\P_{B}$ are all supported by $\Sigma^{\mathcal{W}}$.
%If in addition, $\rho_+,\rho_-\le -\frac{\kappa}{2}-2$, then the pushforward measure ${\cal W}_*(\P^{\rho_+,\rho_-}_{x,y}\oplus \P_{B})$ is supported by the set of weldings $\phi$ which fix $0$ and swap $x$ and $y$.

Fix $\rho_+,\rho_-\in\R$. Observe that $(x,y)\mapsto\P^{\rho_+,\rho_-}_{x,y}\oplus \P_B$ is a probability kernel from the fourth quadrant $Q_4:=\R_+\times\R_-$ to $\Sigma$. Recall the $(\rho_+,\rho_-)$-BSLE$_\kappa$ Green's function $G^{\rho_+,\rho_-}(x,y)$ defined after Proposition \ref{prop: general Greens functions}. % Fix a measurable $U \subset Q_4$, and define a new measure on $\Sigma^\oplus$ by
%$$\P^{\rho_+,\rho_-}_U =\int\!\int_U \P^{\rho_+,\rho_-}_{x,y} G^{\rho_+,\rho_-}(x,y)dxdy.$$
Let $A$ denote the Lebesgue measure on $\R^2$.

\begin{definition}
We say that BSLE$_{\kappa}$ admits a BSLE$_{\kappa}(\rho_+,\rho_-)$ decomposition if there is a $\P_B$-kernel $\nu^{\rho_+,\rho_-}$ from $C[0,\infty)$ to $Q_4$ such that
\BGE \P_B(d\lambda) \otimes \nu^{\rho_+,\rho_-}(\lambda,d(x,y))=(\P^{\rho_+,\rho_-}_{x,y}\oplus \P_B)(d\lambda)\overleftarrow{\otimes} G^{\rho_+,\rho_-}(x,y)\cdot \I_{Q_4} A(d(x,y)).\label{decomposition}\EDE
\end{definition}

In the spirit of Remark \ref{sample}, we may interpret (\ref{decomposition}) as follows. It means that we have two methods to  sample the same measure on the space $C([0,\infty))\times Q_4$. One is first sample $\lambda\in C([0,\infty))$ according to the law of $(\sqrt\kappa B_t)$, and given $\lambda$ then sample $(x,y)\in Q_4$ according to the law $\nu^{\rho_+,\rho_-}(\lambda,\cdot)$. The other is first sample $(x,y)\in Q_4$ according to the measure $G^{\rho_+,\rho_-}\cdot \I_{Q_4} A$, i.e., the Lebesgue measure restricted to $Q_4$ weighted by the Green's function, and given $(x,y)$ then sample $\lambda$ according to the law of the extended BSLE$_\kappa({\rho_+,\rho_-})$ driving function started from $(0;x,y)$.

%for all measurable $U \subset Q_4$, there is some increasing random process $\Theta=\( \Theta^U_t \)_{t>0}$ so that
%$$\P^{\rho_+,\rho_-}_U \ha{\oplus}\P_{B} = \P_{B}\otimes d\Theta_{\cdot}^U.$$
%Recall that $d\Theta_{\cdot}^U$ is a kernel from $\Sigma$ to $[0,\infty)$.

This is a backward analogue to the definition in \cite{Zhan decomp}, which defines what it means for SLE$_{\kappa}$ to admit an SLE$_{\kappa}(\rho)$ decomposition. In \cite{Zhan decomp}, it is proven that SLE$_{\kappa}$ admits both an SLE$_{\kappa}(\kappa-8)$ decomposition and an SLE$_{\kappa}(-8)$ decomposition. In the former case, $\nu$ corresponds to the natural parametrization, and in the latter $\nu$ corresponds to a constant times the capacity parametrization.

Later, we will prove that (\ref{decomposition}) holds for $\kappa\le 4$ and $\rho_+=\rho_-=-4$, and the kernel $\nu$ is given by the pushforward measure $C\Phi^\lambda_*(\mA|_{\R_+})$, where $C$ is a constant depending on $\kappa$, $\mA$ is the Lebesgue measure on $\R$, and $\Phi^\lambda$ is the welding curve generated by $\lambda$ as  in Definition \ref{welding}. % Recall that $a_t<0<b_t$ are the pair of points that are swallowed by the BSLE$_\kappa$ process at time $t$.

We believe that the decomposition also holds for some other parameters $(\rho_+,\rho_-)$, probably $\rho_+=\rho_-=-\frac{\kappa}{2}-4$, and the corresponding kernel $\nu$ is related to the ``natural parametrization" of BSLE$_\kappa$.

\section{Main Theorem}
The goal of this section is to show that, for $\kappa\in(0,4]$, BSLE$_{\kappa}$ admits a BSLE${_{\kappa}}(-4,-4)$ decomposition, and to show the relationship between this decomposition and the capacity time for the welding curve. For this section, we fix $\kappa\in(0,4]$ and write $G$, $M_t$ and $\P_{x,y}$ for $G^{-4,-4}$, $M^{-4,-4}_t$ and $\P^{-4,-4}_{x,y}$, respectively. From Propositions  \ref{prop: general Greens functions} and \ref{prop: decomp 2.3}, we know that $\P_{x,y}\lhd \P_{B}$, and the local Radon-Nikodym derivative is $\I_{\tau_x\wedge \tau_y>t}\frac{M_t(x,y)}{G(x,y)}$, where $G(x,y)=|x|^{\frac{4}{\kappa}}|y|^{\frac{4}{\kappa}}|x-y|^{-\frac{8}{\kappa}}$.
As before, $T$ is the lifetime of the BSLE${_{\kappa}}(-4,-4)$ process. The following is the main technical lemma of the paper.

\begin{lemma}\label{lem: int P is finite}
$\int_{\R_-}\! \int_{\R_+}\P_{x,y}[T\le 1] dxdy <\infty$.
\end{lemma}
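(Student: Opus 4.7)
The strategy is to change variables in order to convert the double integral into an expectation of a functional of the one-dimensional diffusion $\ha W$ from equation (\ref{eq: radial Bessel}), and then estimate that functional.

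I would parametrize $Q_4$ by $(r,w)=(x-y,(x+y)/(x-y))\in(0,\infty)\times(-1,1)$, giving $dx\,dy=(r/2)\,dr\,dw$. By Remark \ref{<-kappa/2-2} and (\ref{eq: tau eq 2}), under $\P_{x,y}$ the lifetime is $T=\frac{r^{2}}{4\kappa}I$, where
$$I:=\int_{0}^{\infty}e^{-4t/\kappa}(1-\ha W_{t}^{2})\,dt,\qquad \ha W_{0}=w,$$
has a law depending on $(x,y)$ only through $w$, and $0\le I\le \kappa/4$. Thus $\P_{x,y}[T\le 1]=1$ for $r\le 4$, contributing $\int_{0}^{4}(r/2)\cdot 2\,dr=8$. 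For $r>4$, substituting $s=4\kappa/r^{2}\in(0,\kappa/4)$ and using Fubini with the identity $\int_{0}^{\kappa/4}\I_{\{I\le s\}}s^{-2}\,ds=1/I-4/\kappa$ (valid since $I<\kappa/4$ a.s.) yields
$$\int_{Q_{4}}\P_{x,y}[T\le 1]\,dx\,dy=\kappa\int_{-1}^{1}\E_{w}[1/I]\,dw,$$
reducing the lemma to the claim that $\int_{-1}^{1}\E_{w}[1/I]\,dw<\infty$.

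I would then prove the stronger statement that $\E_{w}[1/I]$ is uniformly bounded on $(-1,1)$. For $w$ in a compact subset of $(-1,1)$, a short-time Gaussian estimate for (\ref{eq: radial Bessel}) gives $\P_{w}[\sup_{t\le\delta}|\ha W_{t}-w|>\eta]\le Ce^{-c\eta^{2}/((1-w^{2})\delta)}$; on the complementary event, $I\ge c\delta(1-w^{2}-\eta^{2})$, and optimizing in $\eta,\delta$ yields $\P_{w}[I\le s]\le Ce^{-c/s}$ and hence $\E_{w}[1/I]<\infty$. For $w$ near the boundary, say $1-w=\epsilon$ small, the process $Y_{t}:=2(1-\ha W_{t})$ is well-approximated near $0$ by a squared Bessel process of dimension $\delta=8/\kappa\ge 2$ started at $2\epsilon$; under the scaling $Y_{t}=2\epsilon\widetilde Y_{t/(2\epsilon)}$ with $\widetilde Y_{0}=1$, the integral $I$ becomes, up to lower-order corrections, a scale-invariant functional of $\widetilde Y$ concentrated around a positive constant independent of $\epsilon$. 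Sharp lower-tail estimates on integrals of BES$^{2}(\delta)$ processes should then give $\P_{w}[I\le s]\le Cs^{p}$ with $p>1$, uniformly in $w$ near $\pm 1$.

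The main obstacle is precisely this uniform lower-tail bound as $|w|\to 1$. A tempting shortcut is to write $I\ge e^{-4\sigma/\kappa}I'$ with $\sigma:=\inf\{t:\ha W_{t}=0\}$ and apply the strong Markov property, yielding $\E_{w}[1/I]\le\E_{w}[e^{4\sigma/\kappa}]\cdot\E_{0}[1/I]$; however the Feynman--Kac ODE $(1-w^{2})v''-(8/\kappa)wv'+(8/\kappa)v=0$ for $v(w)=\E_{w}[e^{4\sigma/\kappa}]$ admits the bounded solution $v_{1}(w)=w$, an eigenfunction at the critical eigenvalue, so every solution satisfying $v(0)=1$ grows too rapidly as $|w|\to 1$ for this route to give an integrable bound throughout $\kappa\in(0,4]$ (in particular for small $\kappa$). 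One must instead exploit the BES$^{2}$ scaling near the boundary directly to control the lower tail of $I$, keeping the contribution of the full integral rather than discarding the pre-$\sigma$ part.
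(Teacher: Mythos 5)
Your reduction is correct and is essentially the paper's own first step: the change of variables $(x,y)\mapsto (r,w)$ plus Fubini is a rewriting of the scaling identity $\P_{rs,r(s-1)}[T\le 1]=\P_{s,s-1}[T\le r^{-2}]$ followed by $\int_0^\infty \P_s[T\le r^{-2}]r\,dr=\frac12\E_s[T^{-1}]$, so everything hinges on controlling $\E_w[1/I]$ uniformly (or at least integrably) in $w$. The bulk case, $w$ in a compact subset of $(-1,1)$, is routine and your Gaussian sketch is fine. The genuine gap is exactly the part you flag as ``the main obstacle'': the uniform near-boundary lower-tail bound $\P_w[I\le s]\le Cs^p$, $p>1$, is never proved, and the heuristic offered for it does not hold as stated. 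After the substitution $Y_t=2\epsilon\widetilde Y_{t/(2\epsilon)}$ one gets $\int_0^\infty e^{-4t/\kappa}Y_t\,dt=4\epsilon^2\int_0^\infty e^{-(8\epsilon/\kappa)u}\widetilde Y_u\,du$, which is not a scale-invariant functional of $\widetilde Y$ (the parameter $\epsilon$ enters the exponential rate), and the order-one contribution to $I$ comes from times $t$ of order $1$, i.e.\ $\widetilde Y_u$ of order $1/\epsilon$, precisely where both the linearization $1-\ha W_t^2\approx Y_t$ and the squared-Bessel comparison break down ($1-\ha W_t^2$ is bounded by $1$, while a BES$^2$ process grows without bound). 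So the estimate that carries the whole weight of the lemma is asserted (``should then give'') rather than established.

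Moreover, the conclusion you draw from the eigenvalue computation --- that no hitting-time/exponential-moment argument can work and one ``must'' use BES$^2$ scaling directly --- is what steers you away from the argument that actually closes the gap. Your computation only rules out stopping at the midpoint $w=0$, where $\P_w[\sigma>t]$ decays at exactly the critical rate $4/\kappa$ (eigenfunction $v_1(w)=w$). The paper instead stops at a level $2\delta_0-1$ close to $-1$, with $\delta_0=\delta_0(\kappa)$ chosen so small that the process started from the endpoint $-1$ itself (which exists since the dimension $8/\kappa\ge 2$ makes $\pm1$ entrance points, and whose hitting time stochastically dominates that from any starting point in $(-1,2\delta_0-1)$) satisfies $\P[\tau_{\delta_0}>1]<e^{-4/\kappa}$; submultiplicativity, $\P[\tau_{\delta_0}>n]\le\P[\tau_{\delta_0}>1]^n$, then makes $\E_w[e^{\frac4\kappa\tau_{\delta_0}}]$ uniformly bounded near $-1$, and $I\ge e^{-\frac4\kappa\tau_{\delta_0}}I'$ with $I'$ distributed as $I$ started from $2\delta_0-1$ gives the uniform near-boundary bound. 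The interior $w\in[2\delta_0-1,1-2\delta_0]$ is then reduced to the single starting point $2\delta_0-1$ by the coupling $|\ha W^{w}_t|\le|\ha W^{2\delta_0-1}_t|$ (legitimate because the SDE is symmetric about $0$), and the one remaining quantity $\E_{2\delta_0-1}[1/I]$ is finite by an exit-time estimate of the same Gaussian type you use in the bulk. So your plan can be repaired, but not along the route you sketch: the missing idea is to move the stopping level (and the comparison starting point) to the boundary rather than the midpoint, which sidesteps the critical-eigenvalue obstruction you correctly identified.
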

%This lemma states that the probability of two points which are far away being welded together in a short time decays rapidly. By the scaling relation, what matters most is how relatively far $x$ is from $0$ in comparison to $y$. This is reflected in the proof, where we perform a change of variables from $(x,y)$ to $r(t,1-t)$ and perform the most work integrating the single variable $t$.
\begin{proof}
%The positiveness of the integral follows easily from the fact that $T\le (x-y)^2/16$ under $\P_{x,y}$ (see the discussion before Definition \ref{extended}). It remains to prove the finiteness.
%
%To use the scaling relation for BSLE to show that $\int_0^{\infty} \int_0^{\infty} \P_{x,y}[\tau_{x,y} \leq 1]dxdy < \infty,$ we perform a change of variables. Observe that the distribution of $\tau_{x,y}$ is the same as $(x+y)^2 \tau_{t,1-t}$, where $t=\frac{x}{x+y}$. To simplify notation, we will write $\P_t=\P_{t,1-t}$ and $\tau_t=\tau_{t,1-t}$ for $0<t<1$.
Using (\ref{eq: tau eq 2}), we get $\P_{rs,r(s-1)}[T\le 1]=\P_{s,s-1}[T\le r^{-2}]$.
Performing the change of coordinates $x=rs, y=r(s-1)$ and writing $\P_s$ for $\P_{s,s-1}$ and $\E_s$ for $\E_{s,s-1}$, we get that
$$\int_{\R_-}\int_{\R_+} \P_{x,y}[T\le t] dxdy  = \int_0^1\! \int_0^{\infty} \P_{s}[T\le r^{-2}]r dr ds.$$ For any fixed $s\in(0,1)$, observe that
$$\int_0^{\infty}\P_s[T \leq r^{-2}]r dr=\int_0^{\infty} \P_s\[T^{-1} \geq r^2\]rdr=\frac{1}{2} \E_s\[ T^{-1} \].$$  Thus, it suffices to show that $\int_0^1 \E_s\[ T^{-1} \]ds<\infty$.

%For any small $\delta\in(0,1/2)$, we can break up the integral as
%$$\int_0^1 \E_s\[T^{-1} \]ds = \int_0^{\delta} \E_s\[ T^{-1} \]ds+\int_{\delta}^{1-\delta}\E_s\[ T^{-1} \]ds + \int_{1-\delta}^1 \E_s\[ T^{-1} \]ds.$$

\begin{comment}
To complete the proof, we will show there is a small $\delta>0$ so that:
\begin{itemize}
\item[a)] There is a constant $C_{\delta}<\infty$ so that if $0<s\leq\delta$,
\begin{equation}\label{eq: decomp boundary estimate}
\E_s\[ T^{-1} \] \leq C_{\delta}\E_{\delta}\[ T^{-1} \]<\infty.
\end{equation}
\item[b)] For each $s \in (\delta, 1-\delta)$,
\begin{equation}\label{eq: decomp interior estimate}
\E_s\[ T^{-1} \] \leq \E_{\delta}\[T^{-1} \] + \E_{1-\delta}\[ T^{-1} \].
\end{equation}
\end{itemize}
Note that if (\ref{eq: decomp boundary estimate}) is proven, a symmetric argument proves a similar bound for $s \in [1-\delta,1)$. Once a correct $\delta$ can be chosen for (\ref{eq: decomp boundary estimate}), then (\ref{eq: decomp interior estimate}) and both boundary estimates imply that $\int_0^1\E_s\[T^{-1}\]dt<\infty$.
\end{comment}

We use the diffusion process $(\ha W_t)$. Since $a=b=-4$, $x=s$ and $y=s-1$,  equations (\ref{eq: radial Bessel}) and (\ref{eq: tau eq 2})  reduce to
\begin{equation}\label{eq: decomp capacity diffusion}
d\ha{W}_t = -\sqrt{1-\ha{W}_t^2}d\ha B_t - \frac{4}{\kappa}\ha{W}_t dt;
\end{equation}
\BGE \label{eq: T-reduced}
T=\frac{1}{4\kappa}\int_0^\infty e^{-\frac 4\kappa t}(1-\ha W_t^2)dt.
\EDE
Moreover, the law $\P_s$ corresponds to the initial value $\ha W_0=2s-1$. So we will use $\ha W^s_t$ to denote a process that satisfies (\ref{eq: decomp capacity diffusion}) with initial value $2s-1$ at time $0$.

Fix $\delta\in(0,1/2)$ is small. First suppose $s \in [\delta, 1-\delta]$.
We may couple  two processes $(\ha W^{s}_t)$ and  $(\ha W^{\delta}_t)$ such that $|\ha W^{s}_t|\le |\ha W^{\delta}_t|$ for all $t\in[0,\infty)$. For the construction of the coupling, we start the two processes independently, and after the first time that $|\ha W^{s}_t|= |\ha W^{\delta}_t|$, we continue the two processes such that the ratio between them is constant $1$ or $-1$. This is possible since the SDE (\ref{eq: decomp capacity diffusion}) is symmetric about $0$. Using (\ref{eq: T-reduced}) we see that
\BGE \E_{s}[T^{-1}]\le \E_{\delta}[T^{-1}],\quad s\in[\delta,1-\delta].\label{s-big}\EDE

Now suppose $s\in(0,\delta)$. Define $\tau_{\delta}=\inf\{ t>0: \ha{W}^s_t=2\delta-1 \}$. From (\ref{eq: T-reduced}), we get
$$T =\frac{1}{4\kappa} \int_0^{\infty} e^{-\frac{4}{\kappa}t} (1-(\ha{W}^s_t)^2)dt \geq \frac{1}{4\kappa}\int_{\tau_{\delta}}^{\infty} e^{-\frac{4}{\kappa} t} (1-(\ha{W}^s_t)^2)dt=e^{-\frac{4}{\kappa}\tau_{\delta}} \frac{1}{4\kappa} \int_0^{\infty} e^{-\frac{4}{\kappa}t}  (1-(\ha{W}^s_{\tau_{\delta}+t})^2)dt.$$
By the Markov property of $(\ha W_t)$, we know that $(\ha{W}^s_{\tau_{\delta}+t})$ has the law of $(\ha W^\delta_t)$. Thus,
\BGE \E_s[T^{-1}]\le \E_s[e^{\frac{4}{\kappa}\tau_{\delta}}] \cdot \E_\delta[T^{-1}],\quad s\in(0,\delta).\label{s-small}\EDE

We will show that there is some $\delta_0\in(0,1/2)$ such that $\E_{\delta_0}[T^{-1}]<\infty$ and $\E_s[e^{\frac{4}{\kappa}\tau_{\delta_0}}]$, $0<s<\delta_0$, are uniformly bounded by a finite number. Once this is done, using (\ref{s-big},\ref{s-small}) and that $\E_{1-s}[T^{-1}]=\E_s[T^{-1}]$ we then get the finiteness of $\int_0^1 \E_s\[ T^{-1} \]ds$.
%The right hand side is the stopping time for the process started at $t=\delta/2$, so conditioned on $\{T_{\delta}=s\}$, we have $\tau_{t^*} \geq e^{-\frac{4}{\kappa}s} \tau_{\delta/2} $ in distribution. Therefore, $\E_{t^*} \[ 1/\tau_{t^*}  | T_{\delta}=s\] \leq e^{\frac{4}{\kappa}s} \E_{\delta/2}\[ 1/\tau_{\delta/2} \]$. Therefore,
%$$\E_{t^*}\[\frac{1}{\tau_{t^*}}\] =\sum_{n=1}^{\infty}  \E_{t^*}\[ \frac{1}{\tau_{t^*}} | T_{\delta} \in [n-1,n) \] \P_{t^*}[n-1 \leq T_{\delta} < n]$$
%$$\leq \sum_{n=0}^{\infty} e^{\frac{4}{\kappa}n+1} \E_{\delta/2}\[ \frac{1}{\tau_{\delta/2}} \] \P_{t^*}[T_{\delta} \geq n].$$

First, we bound $\E_s[e^{\frac{4}{\kappa}\tau_{\delta_0}}]$, $0<s<\delta_0$. Since the process $(\ha W_t)$ equals to the cosine of a radial Bessel process of dimension $8/\kappa\ge 2$ (\cite[Formula (8.2)]{ZhanE}), it makes sense to define the process $(\ha W^0_t)$. Such process starts from $-1$ at time $0$, and stays in $(-1,1)$ and satisfies (\ref{eq: decomp capacity diffusion}) after the initial time.
One may couple $(\ha W^s_t)$ with $(\ha W^0_t)$ such that $\ha W^0_t\le \ha W^s_t$ for all $t\ge 0$, and so the $\tau_\delta$ for $(\ha W^0_t)$  is bigger than that for $(\ha W^s_t)$. So it suffices to bound $\E_0[e^{\frac{4}{\kappa}\tau_{\delta}}]$ for some $\delta\in(0,1/2)$.
We now estimate $\P_0[\tau_\delta>n]$ for $n\in\N$. By the Markov property of $(\ha W_t)$ and the fact that the $\tau_\delta$ for $(\ha W^s_t)$ is stochastically bounded by the $\tau_\delta$ for $(\ha W^0_t)$, we get $\P_0[\tau_\delta>n]\le \P_0[\tau_\delta>1]^n$. Therefore,
\BGE \E_0[e^{\frac{4}{\kappa}\tau_{\delta}}]\le \sum_{n=1}^\infty e^{\frac{4}{\kappa}n} \P_0[\tau_\delta>n-1]
\le \sum_{n=1}^\infty e^{\frac{4}{\kappa}n}\P_0[\tau_\delta>1]^{n-1}.\label{exp}\EDE
%Since $(\ha W^0_t)$ starts with the initial value $-1$ at time $0$ and stays in $(-1,1)$ immediately after the initial time,
We have $\P_0[\tau_\delta>1]\le \P_0[\sup_{0\le t\le 1}\ha W^0_t<2\delta-1]\to 0$ as $\delta\to 0^+$ since $\ha W_t>-1$ for $t>0$. So there is $\delta_0\in(0,1/2)$ such that $\P_0[\tau_{\delta_0}>1]<e^{-\frac 4\kappa}$. From (\ref{exp}) we get $\E_0[e^{\frac{4}{\kappa}\tau_{\delta_0}}]<\infty$, and so $\E_s[e^{\frac{4}{\kappa}\tau_{\delta_0}}]$, $0<s<\delta_0$, are uniformly bounded.

It remains to show that $\E_{\delta_0}[T^{-1}]<\infty$. Fix $\delta'\in(0,\delta_0)$. Let $\ha\tau_{\delta'}=\inf\{t>0: \ha W^{\delta_0}_t\in\{2\delta'-1,1-2\delta'\}\}$. Then $|W^{\delta_0}_t|\le 1-2\delta'$ on $[0,\ha\tau_{\delta'}]$.  From (\ref{eq: T-reduced}), we get
$$T\ge \frac 1{4\kappa} \int_0^{\ha\tau_{\delta'}}e^{-\frac 4\kappa t}(1-(1-2\delta')^2)dt\ge \frac{\delta'(1-\delta')}{\kappa e^{4/\kappa}} \min\{1,\ha\tau_{\delta'}\}.$$
Thus, $\E_{\delta_0}[T^{-1}]\le \frac{\kappa e^{4/\kappa}}{\delta'(1-\delta')}\E_{\delta_0}[\ha\tau_{\delta'}^{-1}+1]$. So it suffices to show that $\E_{\delta_0}[\ha\tau_{\delta'}^{-1}]<\infty$.

For that purpose, consider the process $\ha V^{\delta_0}_t=\arccos \ha W^{\delta_0}_t$, which satisfies the SDE
\BGE d\ha V^{\delta_0}_t=d\ha B_t +\frac{8/\kappa-1}2\cot(\ha V^{\delta_0}_t)dt\label{V}\EDE
with initial value $\ha V^{\delta_0}_0=\arccos(2\delta_0-1)$. Now $\ha\tau_{\delta'}$ is the first time that $\ha V^{\delta_0}_t$ reaches $\arccos(2\delta'-1)$ or $\pi-\arccos(2\delta'-1)$. Since $|\cot(\ha V^{\delta_0}_t)|$ on $[0,\ha\tau_{\delta'}]$ is bounded by $\cot(\arccos(1-2\delta'))$, from (\ref{V})  we have
$$\ha B_t-At \leq \ha V^{\delta_0}_t-\ha V^{\delta_0}_0 \leq \ha B_t + At,\quad 0\le t\le \ha\tau_{\delta'},$$ where $A:=\frac{8/\kappa-1}2 \cot(\arccos(1-2\delta')) \in(0,\infty)$. Setting $X=|\arccos(2\delta_0-1)-\arccos(2\delta'-1)|>0$, we get
$$|\ha B_{\ha\tau_{\delta'}}|\ge | \ha V^{\delta_0}_{\ha\tau_{\delta'}}-\ha V^{\delta_0}_0 |-A {\ha\tau_{\delta'}}\ge X-A {\ha\tau_{\delta'}}.$$
Thus, if ${\ha\tau_{\delta'}}< \eps<\frac{X}{2A}$, then $\sup_{0\le t\le \eps}|\ha B_t|\ge X-A\eps>X/2$. By scaling property, symmetric property and reflection property of Brownian motion, we get
$$\P_{\delta_0}[{\ha\tau_{\delta'}}< \eps]\le \P[\sup_{0\le t\le 1} |\ha B_t|>\frac{X}{2\sqrt\eps}]\le 2 \P[\sup_{0\le t\le 1} \ha B_t>\frac{X}{2\sqrt\eps}]\le 4\P[ \ha B_1>\frac{X}{2\sqrt\eps}],\quad 0<\eps<\frac X{2A}.$$
As $\eps\to 0^+$, the RHS of the above formula decays like a constant times $e^{-\frac{X^2}{8\eps}}$. So we get  the finiteness of $\E_{\delta_0}[\ha\tau_{\delta'}^{-1}]$. The proof is now complete.
\end{proof}

%From Propositions  \ref{prop: general Greens functions} and \ref{prop: decomp 2.3}, we know that $\P^{\rho_+,\rho_-}_{x,y}\ll \P_{B}$, and the local Radon-Nikodym derivative is ${\bf 1}_{\tau_x\wedge \tau_y>t} M^{\rho_+,\rho_-}_t(x,y)/G^{\rho_+,\rho_-}(x,y)$. One immediate consequence of this fact is that $\P^{\rho_+,\rho_-}_{x,y}[T>t]=\E[M^{(\rho_+,\rho_-)}_t(x,y)]/G^{\rho_+,\rho_-}(x,y)$.
\begin{corollary} \label{cor-Ct}
  Let $C_t=\int_{\R_-}\!\int_{\R_+} G(x,y) \P_{x,y}[T \leq t]dxdy$.
%$$ {G_t(x,y)}={G(x,y)}\P_{x,y}[T \leq t]={G(x,y)}\P_{x,y}[\Sigma\sem \Sigma_t]= {G(x,y)-\E_{B}[\I_{\tau_x\wedge \tau_y>t} M_t(x,y)]};$$
% $$C_t= \int_{\R_-}\!\int_{\R_+} G_t(x,y) dx dy=\int_{\R_-}\!\int_{\R_+} G(x,y) \P_{x,y}[T \leq t]dxdy.$$
Then $C_t=tC_1$, for all $t\ge 0$, and $C_1\in(0,\infty)$.
\end{corollary}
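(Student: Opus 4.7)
The plan is to derive $C_t = tC_1$ from the Brownian scaling of BSLE$_\kappa(-4,-4)$, and then to obtain $C_1 \in (0,\infty)$ by running the change of coordinates already carried out in the proof of Lemma \ref{lem: int P is finite}, together with an elementary bound on the Green's function factor.

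For the scaling identity, I would first note that the Green's function $G(x,y) = |x|^{4/\kappa}|y|^{4/\kappa}|x-y|^{-8/\kappa}$ is homogeneous of degree zero: $G(rx, ry) = G(x,y)$ for all $r>0$. Next, from formulas (\ref{eq: radial Bessel}) and (\ref{eq: tau eq 2}) I would read off that the lifetime $T$ satisfies $\P_{rx, ry}[T \leq t] = \P_{x,y}[T \leq t/r^2]$: the initial value $\hat W_0 = (x+y)/(x-y)$ of the driving diffusion is invariant under $(x,y) \mapsto (rx, ry)$, while the prefactor $(x-y)^2$ in (\ref{eq: tau eq 2}) scales quadratically. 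Substituting $x = \sqrt{t}\,x'$, $y = \sqrt{t}\,y'$ in the definition of $C_t$ (Jacobian $t$) and applying both invariances converts the integral directly into $t$ times the defining integral for $C_1$.

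For finiteness, I would reuse the substitution $x = rs$, $y = r(s-1)$ with $s \in (0,1)$, $r > 0$ (Jacobian $r$) from the proof of the lemma. Under this substitution $G(rs, r(s-1)) = s^{4/\kappa}(1-s)^{4/\kappa}$, and the identity $\int_0^\infty \P_s[T \leq r^{-2}]\,r\,dr = \tfrac{1}{2}\E_s[T^{-1}]$ already recorded in that proof yields
$$C_1 = \frac{1}{2}\int_0^1 s^{4/\kappa}(1-s)^{4/\kappa}\,\E_s[T^{-1}]\,ds.$$
Since $s^{4/\kappa}(1-s)^{4/\kappa} \leq 1$ on $[0,1]$, the right-hand side is dominated by $\tfrac{1}{2}\int_0^1 \E_s[T^{-1}]\,ds$, which is finite by Lemma \ref{lem: int P is finite}. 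For strict positivity, the deterministic bound $T \leq (x-y)^2/16$ obtained from (\ref{X-Y}) implies $\P_{x,y}[T \leq 1] = 1$ whenever $|x-y| \leq 4$; integrating the strictly positive $G$ over this region of $Q_4$ yields $C_1 > 0$.

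There is no substantive obstacle; the only point that deserves care is the scaling of the law of $T$, which is immediate from the scale invariance of the initial condition of the $(\hat W_s)$ diffusion.
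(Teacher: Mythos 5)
Your proposal is correct and follows essentially the same route as the paper: the identity $C_t=tC_1$ comes from the degree-zero homogeneity of $G$ together with the scaling of the law of $T$ read off from (\ref{eq: tau eq 2}), positivity comes from the deterministic bound $T\le (x-y)^2/16$ on the strip $\{x-y\le 4\}$ where $G>0$, and finiteness comes from Lemma \ref{lem: int P is finite} combined with $G\le 1$ (your explicit reduction to $\tfrac12\int_0^1 s^{4/\kappa}(1-s)^{4/\kappa}\E_s[T^{-1}]\,ds$ is just the paper's bound $G<1$ written in the $(r,s)$ coordinates of the lemma's proof). No gaps.
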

\begin{proof}   That $C_t=tC_1$ follows from (\ref{eq: tau eq 2}) and the fact that $G(x,y)$ is homogeneous of degree $0$. That $C_1>0$ follows from the fact that $T\le (x-y)^2/16\le 1$ under $\P_{x,y}$ when $x-y\le 4$ (see the discussion before Definition \ref{extended}) and that $G>0$. That $C_1<\infty$ follows from Lemma \ref{lem: int P is finite} and that $G< 1$.
\end{proof}

We now make use of the results in Section \ref{subsection: random lifetime}. For any $N>0$ and $x>0>y$, define the measure $P^N_{x,y}$ by $P^N_{x,y}[E]=\P_{x,y}[E \backslash \Sigma_N]$, where $\Sigma_N = \{f \in \Sigma : T_f >N\}$. Note that this is not a probability measure.
Let $E \in \mathcal{F}_t \cap \Sigma_t$. If $t \geq N$, then $P^N_{x,y}[E]=0$. If $t < N$, then
$$P^N_{x,y}[E]:=\P_{x,y}[E]-\P_{x,y} [E \cap \Sigma_N]=\int_E\I_{\tau_x\wedge \tau_y>t} \frac{M_t(x,y)}{G(x,y)} d\P_{B}-\int_{E \cap \Sigma_N}\I_{\tau_x\wedge \tau_y>N} \frac{M_N(x,y)}{G(x,y)}d\P_{B}$$
$$= \int_E \frac{ \I_{\tau_x\wedge \tau_y>t}}{G(x,y)} ( M_t(x,y)-\E_{B}[\I_{\tau_x\wedge \tau_y>N}  M_N(x,y)|\mathcal{F}_t])d\P_{B}.$$
Therefore, we conclude that
\begin{equation}\label{eq: capacity local rn derivative}
\frac{d P^N_{x,y}|_{ \mathcal{F}_t \cap \Sigma_t}}{d\P_{B}|_{ \mathcal{F}_t \cap \Sigma_t}} =  \frac{\I_{\{t <N\wedge \tau_x\wedge \tau_y\}}}{G(x,y)} \( M_t(x,y)-\E_{B}[\I_{\tau_x\wedge \tau_y>N} M_N(x,y)| \mathcal{F}_t] \).
\end{equation}
%For each $t \geq 0$, define a function $G_t(x,y)$ by
%$$ {G_t(x,y)}={G(x,y)}\P_{x,y}[T \leq t]={G(x,y)}\P_{x,y}[\Sigma\sem \Sigma_t]= {G(x,y)-\E_{B}[\I_{\tau_x\wedge \tau_y>t} M_t(x,y)]} .$$
%Then $G_0(x,y)=0$ for all $x>0>y$, and $\lim_{t \to \infty}G_t(x,y)=G(x,y) \P_{x,y}[\tau_{x,y} < \infty ]=G(x,y)$.
%By equation (\ref{eq: tau eq 2})
%and the fact that $G(x,y)$ is homogeneous of degree $0$, it follows that $G_t(x,y)=G_1( \frac{x}{\sqrt{t}}, \frac{y}{\sqrt{t}} )$.

It is straightforward to check that, for a fixed $t_0\ge 0$, the maps $(f^{t_0}_t(z):= f_{t_0+t}\circ f_{t_0}^{-1}(z+\lambda_{t_0})-\lambda_{t_0})_{t\ge 0}$ are the backward Loewner maps driven by $(\lambda^{t_0}_t:=\lambda_{t_0+t}-\lambda_{t_0})_{t\ge 0}$. Since $\lambda_t=\sqrt\kappa B_t$, from the Markov property of Brownian motion, we find that $(\lambda^{t_0}_t)$ has the same law as $(\lambda_t)$, and is independent of $\lambda_s$, $0\le s\le t_0$. With $X_{t_0}=f_{t_0}(x)-\lambda_{t_0}$ and $Y_{t_0}=f_{t_0}(y)-\lambda_{t_0}$, we may rewrite $M_{t_0+t}(x,y)$ as
$$M_{t_0+t}(x,y)
%=f_{t_0+t}'(x)f_{t_0+t}'(y) G(f_{t_0+t}(x)-\lambda_{t_0+t},f_{t_0+t}(y)-\lambda_{t_0+t})$$
%$$=f_{t_0}'(x)f_{t_0}'(y) (f^{t_0}_t)'(X_{t_0})(f^{t_0}_t)'(Y_{t_0})G(f^{t_0}_t(X_{t_0})-\lambda^{t_0}_t, %f^{t_0}_t(X_{t_0})-\lambda^{t_0}_t)$$
%$$
=f_{t_0}'(x)f_{t_0}'(y) M^{\lambda^{t_0}_\cdot}_t(X_{t_0},Y_{t_0}),$$
where $M^{\lambda^{t_0}_\cdot}_t(\cdot,\cdot)$ is the $M_t(\cdot,\cdot)$ function generated by $\lambda^{t_0}_\cdot$. Define $G_t$ on $Q_4$ by
$$ {G_t(x,y)}={G(x,y)}\P_{x,y}[T \leq t]={G(x,y)}\P_{x,y}[\Sigma\sem \Sigma_t]= {G(x,y)-\E_{B}[\I_{\tau_x\wedge \tau_y>t} M_t(x,y)]}.$$
Then we have
\BGE C_t=\int_{\R_-}\!\int_{\R_+} G_t(x,y) dx dy\label{CtGt}\EDE
\BGE M_t(x,y)-\E_{B}[\I_{\tau_x\wedge \tau_y>N} M_N(x,y)| \mathcal{F}_t] =f_t'(x)f_t'(y) G_{N-t}(f_t(x)-\lambda_t,f_t(y)-\lambda_t).\label{Mt-condition}\EDE

%Let $C_{t}:= \int_{\R_-}\!\int_{\R_+} G_t(x,y) dx dy=\int_{\R_-}\!\int_{\R_+} G(x,y) \P_{x,y}(T \leq t)dxdy$. From the scaling property of $G_t$, we get $C_{t}=tC_{1}$.
%Since $C_{1}=\int_0^{\infty} \int_0^{\infty} G(x,y) \P^*_{x,y}[\tau_{x,y} \le 1]dx dy$, Lemma \ref{lem: int P is finite} and the fact that $G(x,y)>0$ implies that $C_{1}>0$. Moreover,  Lemma \ref{lem: int P is finite} together with the fact that $G\le 1$ implies that $C_{1}< \infty$.

Let $P^N=\int_{\R_-}\!\int_{\R_+} G(x,y) P^N_{x,y}dxdy$.
From (\ref{eq: capacity local rn derivative},\ref{CtGt},\ref{Mt-condition}), Proposition \ref{prop: decomp prop 2.1} and Corollary \ref{cor-Ct}, we know that $P^N\lhd \P_B$, and $$ \frac{d P^N |_{ \mathcal{F}_t \cap \Sigma_t}}{d\P_{B}|_{ \mathcal{F}_t \cap \Sigma_t}} =\int_{-\infty}^{c_t}\!\int_{d_t}^\infty \I_{t<N} (M_t(x,y)-\E_{B}[M_N(x,y)| \mathcal{F}_t])dxdy $$
$$=\I_{t<N} \int_{-\infty}^{c_t}\!\int_{d_t}^\infty f_t'(x)f_t'(y) G_{N-t}(f_t(x)-\lambda_t,f_t(y)-\lambda_t)dxdy $$
$$=\I_{t<N} \int_{-\infty}^{0}\!\int_{0}^\infty G_{N-t}(X,Y)dXdY=\I_{t<N} C_{N-t}=((N-t)\vee 0)C_1,$$
where the first equality of the last line follows from a change of variables and the fact that $f_t-\lambda_t$ maps $(-\infty,c_t)$ and $(d_t,\infty)$ onto $\R_-$ and $\R_+$, respectively.

Let $\mA$ denote the Lebegue measure on $\R$. From Proposition \ref{prop: decomp prop 2.2} we get $P^N={\cal K}_{C_1\mA|_{[0,N]}}(\P_B)$. Using Proposition \ref{prop: decomp prop 2.4}, we then get $P^N\ha\oplus \P_B=\P_B\otimes C_1\mA|_{[0,N]}$. Using the definition of $P^N$, we get
$$\int_{\R_-}\!\int_{\R_+} G(x,y) P^N_{x,y}\ha \oplus\P_B dxdy =\P_B\otimes C_1\mA|_{[0,N]}.$$
Applying the map $(\lambda,t)\mapsto (\lambda,\Phi^\lambda(t))$ to both sides of the above formula, where $\Phi^\lambda$ is the welding curve generated by $\lambda$ as in Definition \ref{welding}, and using Remark \ref{<-kappa/2-2}, we get
$$(P^N_{x,y}\ha \oplus\P_B)(d\lambda)\overleftarrow{\otimes}{\bf 1}_{Q_4} G(x,y)\cdot A(dxdy)=
C_1\P_B(d\lambda)\otimes \Phi^\lambda_*(\mA|_{[0,N]})(dxdy).$$
Since $P^N_{x,y}=\P_{x,y}|_{\Sigma\sem \Sigma_N}$, by sending $N\to\infty$, we get the following theorem, which tells us that BSLE$_\kappa$ admits BSLE$_\kappa(-4,-4)$ decomposition, and the corresponding kernel is associated with the capacity parametrization. It is similar to \cite[Theorem 5.1]{Zhan decomp}.

\begin{Theorem}\label{thm: Decomp BSLE capacity} Let $\kappa\in(0,4]$. Let $\P_B$ denote the law of $(\sqrt\kappa B_t)_{t\ge 0}$, where $(B_t)$ is a standard Brownian motion. For $x>0>y$, let $\ha P_{x,y}$ denote the law of the driving function for the extended BSLE$_\kappa(-4,-4)$ process started from $(0;x,y)$. Let $\mA$ and $A$ denote the Lebegue measure on $\R$ and $\R^2$, respectively. Let $G(x,y)=|x|^{\frac{4}{\kappa}}|y|^{\frac{4}{\kappa}}|x-y|^{-\frac{8}{\kappa}}$ for $(x,y)\in \R_+\times\R_-$. Let $\Phi^\lambda$ be the welding curve from $\R_+$ into $\R_+\times\R_-$ generated by $\lambda$ as in Definition \ref{welding}. Then there is a constant $C_1\in(0,\infty)$ depending only on $\kappa$ such that
\begin{equation}\label{eq: decomp BSLE capacity}
\ha\P_{x,y}(d\lambda)\overleftarrow{\otimes}{\bf 1}_{ \R_+\times\R_-} G(x,y)\cdot A(dxdy)=C_1
\P_B(d\lambda)\otimes \Phi^\lambda_*(\mA|_{\R_+})(dxdy).
\end{equation}
\end{Theorem}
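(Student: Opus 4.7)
The plan is to reduce the theorem to a computation of local Radon--Nikodym derivatives and then feed the result into the framework of Section \ref{subsection: random lifetime}. For each $N>0$ and $x>0>y$ I would first work with the truncated sub-probability measure $P^N_{x,y} := \P_{x,y}|_{\Sigma\setminus\Sigma_N}$, and aim to establish the intermediate identity $\int_{Q_4} G(x,y)\bigl(P^N_{x,y}\,\ha{\oplus}\,\P_B\bigr)\,dxdy = \P_B \otimes C_1\mA|_{[0,N]}$ for a constant $C_1=C_1(\kappa)\in(0,\infty)$. Once this is in hand, the measurable map $(\lambda,t)\mapsto(\lambda,\Phi^\lambda(t))$ combined with Remark \ref{<-kappa/2-2} (which applies precisely because $-4\le -\kappa/2-2$ when $\kappa\le 4$, so that the welding curve hits $(x,y)$ at time $T$ under $\ha\P_{x,y}$) pushes the left side to $\ha\P_{x,y}(d\lambda)\overleftarrow\otimes G(x,y)\cdot \I_{Q_4}A(dxdy)$ truncated at $t=N$, and monotone convergence as $N\to\infty$ delivers \eqref{eq: decomp BSLE capacity}.

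The computational step is to combine Proposition \ref{prop: general Greens functions} with Proposition \ref{prop: decomp 2.3} to write
$$\frac{dP^N_{x,y}|_{\F_t\cap\Sigma_t}}{d\P_B|_{\F_t\cap\Sigma_t}} = \frac{\I_{\{t<N\wedge\tau_x\wedge\tau_y\}}}{G(x,y)}\Bigl(M_t(x,y)-\E_B\bigl[\I_{\{\tau_x\wedge\tau_y>N\}}M_N(x,y)\mid\F_t\bigr]\Bigr),$$
and then to exploit the backward-Loewner Markov shift $M_{t+s}(x,y) = f_t'(x) f_t'(y)\, M^{\lambda^t_\cdot}_s(f_t(x)-\lambda_t,\,f_t(y)-\lambda_t)$, with $\lambda^t_\cdot = \lambda_{t+\cdot}-\lambda_t$, to express the conditional expectation in terms of the function $G_r(x,y) := G(x,y)-\E_B[\I_{\{\tau_x\wedge\tau_y>r\}}M_r(x,y)]$. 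This reduces $G(x,y)\,dP^N_{x,y}/d\P_B$ on $\F_t\cap\Sigma_t$ to the clean form $\I_{\{t<N\}}\, f_t'(x)f_t'(y)\, G_{N-t}\bigl(f_t(x)-\lambda_t,\,f_t(y)-\lambda_t\bigr)$. Integrating $(x,y)$ over $(-\infty,c_t)\times(d_t,\infty)$ and changing variables via $z\mapsto f_t(z)-\lambda_t$ collapses the integral to $\int_{\R_-}\!\int_{\R_+} G_{N-t}(X,Y)\,dXdY = C_{N-t}$.

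At this point the degree-zero homogeneity of $G$ together with \eqref{eq: tau eq 2} (Corollary \ref{cor-Ct}) gives the crucial scaling $C_t=tC_1$ with $C_1\in(0,\infty)$, which turns the integrated local RN derivative into the affine expression $\I_{\{t<N\}}(N-t)C_1 = C_1\,\mA|_{[0,N]}((t,\infty))$. Proposition \ref{prop: decomp prop 2.1} justifies the integration, which is exactly where Lemma \ref{lem: int P is finite} becomes essential, since without it the integrated kernel need not be $\sigma$-finite. Proposition \ref{prop: decomp prop 2.2} then identifies $\int_{Q_4} G(x,y)P^N_{x,y}\,dxdy = \mathcal{K}_{C_1\mA|_{[0,N]}}(\P_B)$, and Proposition \ref{prop: decomp prop 2.4} upgrades this to the time-marked identity in the first paragraph, completing the plan.

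The main obstacle in this route is really Lemma \ref{lem: int P is finite}, whose proof calls for a delicate analysis of the radial-Bessel-type diffusion $(\ha W_t)$ near both endpoints of $(-1,1)$: one needs uniform control of $\E_s[T^{-1}]$ as $s\to 0^+$ and $s\to 1^-$. Once that integrability is in hand and the Markov-shift identity for $M_{t+s}$ is recorded, everything else is a formal manipulation of kernels within the framework of Section \ref{subsection: random lifetime}.
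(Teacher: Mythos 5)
Your proposal is correct and follows essentially the same route as the paper's proof: truncation to $P^N_{x,y}=\P_{x,y}|_{\Sigma\setminus\Sigma_N}$, the local Radon--Nikodym computation via Propositions \ref{prop: general Greens functions} and \ref{prop: decomp 2.3}, the Markov-shift identity for $M_{t+s}$ and the change of variables producing $C_{N-t}=(N-t)C_1$ from Corollary \ref{cor-Ct} and Lemma \ref{lem: int P is finite}, then Propositions \ref{prop: decomp prop 2.1}, \ref{prop: decomp prop 2.2}, \ref{prop: decomp prop 2.4}, the pushforward under $(\lambda,t)\mapsto(\lambda,\Phi^\lambda(t))$ justified by Remark \ref{<-kappa/2-2}, and finally $N\to\infty$. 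No gaps beyond the technical work already contained in Lemma \ref{lem: int P is finite}, which you correctly identify as the essential ingredient.
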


By restricting the measures on both sides of (\ref{eq: decomp BSLE capacity}) to $\Sigma\times U$ for any given measurable set $U\subset \R_+\times\R_-$, and looking at the marginal measures of the second coordinate  and the first coordinate, respectively, we get the following corollary.

\begin{corollary}
Let $\kappa\in(0,4]$. There is a constant $C_1\in(0,\infty)$ depending on $\kappa$ such that the following holds.
\begin{enumerate}
  \item [(i)]  For any measurable set $U\subset \R_+\times\R_-$, we have the expectation of the total (capacity) time that a BSLE$_\kappa$ welding curve spends in $U$:
  $$\E_B[\mA(\{t\ge 0:\Phi(t)\in U\})]=C_1^{-1}\int\!\int_U G(x,y)dxdy,$$ where $G(x,y)=|x|^{\frac{4}{\kappa}}|y|^{\frac{4}{\kappa}}|x-y|^{-\frac{8}{\kappa}}$.
  \item [(ii)] If, in addition, $\int\!\int_U G(x,y)dxdy<\infty$, then the integral of the laws of the  extended BSLE$_{\kappa}(-4,-4)$ process started from $(0;x,y)$    against the measure $\I_U G (x,y)dxdy$ is a bounded measure, which is absolutely continuous with respect to the law of the BSLE$_{\kappa}$ process, and the Radon-Nikodym derivative is $C_1 \mA(\{t\ge 0:\Phi(t)\in U\})$.
\end{enumerate}
\end{corollary}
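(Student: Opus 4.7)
The plan is to derive both parts of the corollary by restricting the measure identity \eqref{eq: decomp BSLE capacity} to $\Sigma \times U$ and then taking the appropriate marginal. The identity \eqref{eq: decomp BSLE capacity} asserts equality of two finite (on the restriction) measures on the product space $\Sigma \times (\R_+ \times \R_-)$. Restricting both sides to $\Sigma \times U$ is the same as multiplying the $(x,y)$-component by $\I_U$, which yields
\begin{equation*}
\I_U(x,y)\,\ha\P_{x,y}(d\lambda)\overleftarrow{\otimes} G(x,y)\cdot A(dxdy)
= C_1\,\P_B(d\lambda)\otimes \Phi^\lambda_*(\mA|_{\{t\ge 0:\Phi^\lambda(t)\in U\}})(dxdy).
\end{equation*}

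For part (i), I would compute the total mass of both sides, i.e.\ marginalize over both coordinates. On the left, since $\ha\P_{x,y}$ is a probability measure on $\Sigma$ for every $(x,y)\in Q_4$, the total mass collapses to $\int\!\int_U G(x,y)\,dxdy$. On the right, first integrate out the $(x,y)$ coordinate using the pushforward: for each $\lambda$, the measure $\Phi^\lambda_*(\mA|_{\R_+})$ evaluated at $U$ equals $\mA(\{t\ge 0:\Phi^\lambda(t)\in U\})$. Integrating the remaining factor against $\P_B$ produces $C_1\E_B[\mA(\{t\ge 0:\Phi(t)\in U\})]$. Equating the two expressions gives (i); note that this is an equality in $[0,\infty]$ that holds without any assumption on $U$.

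For part (ii), I would instead take the marginal measure on the first coordinate $\lambda$. The right-hand side is the $\sigma$-finite measure on $\Sigma$ whose density with respect to $\P_B$ is $C_1\mA(\{t\ge 0:\Phi^\lambda(t)\in U\})$. By (i), the assumption $\int\!\int_U G\,dxdy<\infty$ is equivalent to this density being $\P_B$-integrable, hence defining a bounded measure. The left-hand marginal is by definition $\int\!\int_U G(x,y)\,\ha\P_{x,y}(d\lambda)\,dxdy$, which is precisely the integral of the extended BSLE$_\kappa(-4,-4)$ laws against $\I_U G(x,y)\,dxdy$. Equating the two marginals yields the asserted absolute continuity together with the Radon-Nikodym derivative.

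There is no substantive obstacle: both steps are measure-theoretic bookkeeping once \eqref{eq: decomp BSLE capacity} is in hand. The only point requiring mild care is that the measure on the right of \eqref{eq: decomp BSLE capacity} is a priori only $\sigma$-finite rather than finite, so one should invoke monotone convergence (e.g.\ approximate $U$ by sets of finite measure) to justify Fubini when computing the marginals; since $G$, $\I_U$, and $\mA$ are all non-negative this is automatic.
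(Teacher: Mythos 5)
Your proposal is correct and matches the paper's own (one-sentence) derivation: restrict both sides of \eqref{eq: decomp BSLE capacity} to $\Sigma\times U$ and read off the marginal in the $(x,y)$-coordinate (equivalently, the total mass) for (i) and the marginal in the $\lambda$-coordinate for (ii), using that each $\ha\P_{x,y}$ is a probability measure. The added remark about monotone convergence/Fubini for the $\sigma$-finite right-hand side is a harmless extra precaution, not a departure from the paper's argument.
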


\end{document}